  \theoremstyle{plain}
    \newtheorem{thm}{Theorem}[section]
    \newtheorem{prop}[thm]{Proposition}
    \newtheorem{corollary}[thm]{Corollary}
    \newtheorem{subsec}[thm]{}
\theoremstyle{definition}
    \newtheorem{defn}[thm]{Definition}
        \newtheorem{remark}[thm]{Remark}
    \newtheorem{exam}[thm]{Example}
\theoremstyle{remark}
\title{}
\author{}
\date{}
\begin{document}

\title{Cohomology of morphism Lie algebras and some applications}

\author{Apurba Das\footnote{Email: apurbadas348@gmail.com
}}
%\address{Department of Mathematics and Statistics,
%Indian Institute of Technology, Kanpur 208016, Uttar Pradesh, India.}
%\email{apurbadas348@gmail.com}

\maketitle

\begin{center}
%Department of Mathematics\\
%Indian Institute of Technology, Kharagpur 721302, West Bengal, India.\\
%Email: apurbadas348@gmail.com
\end{center}

%\curraddr{}
%\email{}

%\subjclass[2010]{}
%\keywords{}

\begin{abstract}
A morphism Lie algebra is a triple $(\mathfrak{g}, \mathfrak{h}, \phi)$ consisting of two Lie algebras $\mathfrak{g}, \mathfrak{h}$ and a Lie algebra homomorphism $\phi : \mathfrak{g} \rightarrow \mathfrak{h}$. We define representations and cohomology of morphism Lie algebras.  
As applications of our cohomology, we study some aspects of deformations, abelian extensions of morphism Lie algebras and classify skeletal morphism sh Lie algebras. Finally, we consider the cohomology of morphism Lie groups and find a relation with the cohomology of morphism Lie algebras.
\end{abstract}

\medskip

\noindent {\sf 2010 MSC classification:} 17B55, 17B56.

\noindent {\sf Keywords:} Morphism Lie algebras, Cohomology, Deformations, Morphism sh Lie algebras, Morphism Lie groups. 

\medskip

%\noindent {\sf Date of resubmission:} July 26, 2021.

\thispagestyle{empty}

\tableofcontents

%\vspace{0.2cm}

%\textcolor{red}{When a pre-Leibniz algebra is called commutative (zinbiel version)? When symmetric (symmetric Leibniz version)?}

\section{Introduction}
Cohomology of some algebraic structures (e.g. associative, Lie, commutative algebras) plays a key role in the study of deformations and extensions of algebras \cite{gers,hoch}. They are also useful in representation theory \cite{benson}, category theory \cite{van}, differential geometry of Lie groups \cite{che}, homotopy algebras \cite{baez-crans} and quantization of Poisson manifolds in mathematical physics \cite{kont}. In the last twenty years, cohomology theory has been formulated for various algebras including algebras over binary quadratic operads \cite{bala-operad} and combinatorial Loday-type algebras \cite{das-jpaa}. Our main objective in this paper is the notion that consists of a triple $(\mathfrak{g}, \mathfrak{h}, \phi)$ of two Lie algebras $\mathfrak{g}, \mathfrak{h}$ and a Lie algebra homomorphism $\phi : \mathfrak{g} \rightarrow \mathfrak{h}$ between them. We call such a triple a morphism Lie algebra. Thus, a morphism Lie algebra captures information of two Lie algebras as well as a Lie algebra homomorphism between them. Morphism Lie algebras arise from Rota-Baxter operators, Nijenhuis operators and differentiating Lie group homomorphisms. The aim of this paper is to define and study the cohomology theory of morphism Lie algebras. We provide applications of our cohomology in some aspects of deformation problems, abelian extensions and homotopy algebras. We also consider morphism Lie groups, define their cohomology and find a relation with morphism Lie algebras.

\medskip

Deformation theory is not only limited to algebraic structures. In \cite{nij-ric-hom} Nijenhuis and Richardson first considered deformations of algebra homomorphisms by keeping the underlying algebras intact. Later, simultaneous deformations of algebras and homomorphisms were considered by Gerstenhaber and Schack \cite{gers-sch}. Simultaneous deformations of two Lie algebras $\mathfrak{g}, \mathfrak{h}$ and a Lie algebra homomorphism $\phi : \mathfrak{g} \rightarrow \mathfrak{h}$ were studied by Fr\'{e}gier \cite{fregier} by introducing a suitable cohomology. As mentioned before, we call the whole triple $(\mathfrak{g}, \mathfrak{h}, \phi)$ a morphism Lie algebra. We define representations of a morphism Lie algebra $(\mathfrak{g}, \mathfrak{h}, \phi)$ and introduce cohomology with coefficients in a representation. When considering the cohomology with coefficients in the adjoint representation, our cohomology coincides with the one introduced by Fr\'{e}gier. We also find a sufficient condition for the vanishing of our cohomology in terms of the vanishing of some Lie algebra cohomology groups.

\medskip

The cohomology of some type of algebras with coefficients in the adjoint representation only allows the study of deformations of algebras. However, to study some other deformation problems (e.g. deformations of homomorphisms between algebras and deformations of subalgebras), one needs to know the cohomology with coefficients in a representation \cite{nij-ric-hom,ric-sub}. Therefore, our cohomology of morphism Lie algebras with coefficients in a representation gave us the freedom to consider the above-mentioned deformation problems for morphism Lie algebras. First, we study deformations of homomorphisms between morphism Lie algebras. We also find a sufficient condition for the rigidity of a homomorphism between morphism Lie algebras. Second, we study deformations of morphism Lie subalgebras in terms of the cohomology of a suitable morphism Lie algebra. 

\medskip

In the next part, we study abelian extensions of morphism Lie algebras in terms of cohomology. More precisely, we introduce a notion of `simple' cohomology of morphism Lie algebras and show that isomorphism classes of abelian extensions are classified by the simple second cohomology group. Another justification of our cohomology can be given in terms of some suitable morphism sh Lie algebras. Recall that, sh Lie algebras (strongly homotopy Lie algebras) are the homotopy analogue of Lie algebras where the Jacobi identity holds only up to a homotopy. In \cite{baez-crans} Baez and Crans showed that skeletal sh Lie algebras are classified by the third cohomology groups of Lie algebras. We generalize their result in the context of morphism Lie algebras. More precisely, we introduce skeletal morphism sh Lie algebras and show that such structures can be classified by the third cohomology groups of morphism Lie algebras introduced in the paper.

\medskip

Finally, we consider morphism Lie groups, the global object for morphism Lie algebras. We introduce the cohomology of morphism Lie groups with coefficients in a module. This cohomology is obtained as byproducts of known group cohomologies. In the end, we define the differential cohomology of a morphism Lie group and find its relation with the cohomology of morphism Lie algebra.

\medskip

The paper is organized as follows. In Sections \ref{sec-2} and \ref{section-cohomology}, we respectively study representations and cohomology of morphism Lie algebras. In Section \ref{sec-4}, we study deformations of homomorphisms between morphism Lie algebras and deformations of morphism Lie subalgebras in terms of cohomology. Abelian extensions of morphism Lie algebras and classification of skeletal morphism sh Lie algebras are given in Sections \ref{sec-5} and \ref{sec-6}, respectively. Finally, morphism Lie groups and their cohomology theoretic results are considered in Section \ref{sec-7}.

\medskip

All vector spaces, (multi)linear maps, tensor products and wedge products are over a field {\bf k} of characteristic $0$.

\section{Morphism Lie algebras and their representations}\label{sec-2}
In this section, we introduce morphism Lie algebras and define their representations.

Let $\mathfrak{g}$ be a Lie algebra. We denote the Lie bracket on $\mathfrak{g}$ by $[~,~]_\mathfrak{g}$. A representation of the Lie algebra $\mathfrak{g}$ is a vector space $V$ together with a linear map (called the action map) $\rho : \mathfrak{g} \rightarrow \mathrm{End}(V)$ satisfying
\begin{align*}
\rho ([x,y]_\mathfrak{g}) = \rho (x) \circ \rho (y) - \rho (y) \circ \rho (x), ~ \text{ for } x, y \in \mathfrak{g}.
\end{align*}
We denote a representation as above simply by $V$ when the action map is clear from the context. Note that any Lie algebra $\mathfrak{g}$ is a representation of itself with the action map $\rho : \mathfrak{g} \rightarrow \mathrm{End}(\mathfrak{g})$ given by $\rho (x) (y) = [x, y]_\mathfrak{g}$, for $x, y \in \mathfrak{g}$. This is called the adjoint representation.

Given a Lie algebra $\mathfrak{g}$ and a representation $V$, there is a cochain complex (called the Chevalley-Eilenberg cochain complex) $\{ C^\ast_\mathsf{CE}(\mathfrak{g}, V) , \delta_\mathsf{CE} \}$, where $C^n_\mathsf{CE} (\mathfrak{g}, V) = \mathrm{Hom}(\wedge^n \mathfrak{g}, V))$ for $n \geq 0$, and the differential $\delta_\mathsf{CE} : C^n_\mathsf{CE} (\mathfrak{g}, V) \rightarrow C^{n+1}_\mathsf{CE} (\mathfrak{g}, V)$ given by
\begin{align*}
(\delta_\mathsf{CE} f)(x_1, \ldots, x_{n+1}) =~& \sum_{i=1}^{n+1} (-1)^{i+1} ~\rho (x_i) f (x_1, \ldots, \widehat{x_i}, \ldots , x_{n+1})  \\
~&+ \sum_{1 \leq i < j \leq n+1} (-1)^{i+j} ~ f ([x_i, x_j], x_1, \ldots, \widehat{x_i}, \ldots, \widehat{x_j}, \ldots, x_{n+1}),
\end{align*}
for $f \in C^n_\mathsf{CE} (\mathfrak{g}, V)$ and $x_1, \ldots, x_{n+1} \in \mathfrak{g}$. The cohomology groups of this cochain complex are called the Chevalley-Eilenberg cohomology groups of $\mathfrak{g}$ with coefficients in $V$, and they are denoted by $H^\ast_\mathsf{CE}(\mathfrak{g},V)$.

Let $\mathfrak{g}$ and $\mathfrak{h}$ be two Lie algebras. A linear map $\phi : \mathfrak{g} \rightarrow \mathfrak{h}$ is said to be a homomorphism of Lie algebras if $\phi$ satisfies $\phi ([x, y]_\mathfrak{g}) = [\phi (x), \phi (y)]_\mathfrak{h}$, for $x, y \in \mathfrak{g}$.

\begin{defn}
A morphism Lie algebra is a triple $(\mathfrak{g}, \mathfrak{h}, \phi)$ consist of two Lie algebras $\mathfrak{g}, \mathfrak{h}$ and a homomorphism $\phi : \mathfrak{g} \rightarrow \mathfrak{h}$ of Lie algebras.
\end{defn}

\begin{defn}
Let $(\mathfrak{g}, \mathfrak{h}, \phi)$ and $(\mathfrak{g}', \mathfrak{h}', \phi')$ be two morphism Lie algebras. A homomorphism between them is a pair $(\alpha, \beta)$ consists of Lie algebra homomorphisms $\alpha : \mathfrak{g} \rightarrow \mathfrak{g}'$ and $\beta : \mathfrak{h} \rightarrow \mathfrak{h}'$ satisfying $\phi' \circ \alpha = \beta \circ \phi$. It is called an isomorphism if $\alpha$ and $\beta$ are both isomorphisms.
\end{defn}

\begin{defn}
Let $(\mathfrak{g}, \mathfrak{h}, \phi)$ be a morphism Lie algebra. A representation of it consists of a triple $(V,W, \psi)$ in which $V$ is a representation of $\mathfrak{g}$, $W$ is a representation of $\mathfrak{h}$ and $\psi : V \rightarrow W$ is a linear map satisfying
\begin{align*}
\psi (\rho_V (x) v ) = \rho_W (\phi (x)) \psi (v), ~ \text{ for } x \in \mathfrak{g}, v \in V.
\end{align*}
\end{defn}

\begin{exam}
Any morphism Lie algebra $(\mathfrak{g}, \mathfrak{h}, \phi)$ is a representation of itself, where $\mathfrak{g}$ and $\mathfrak{h}$ are equipped with respective adjoint representations. We call this the adjoint representation of the morphism Lie algebra $(\mathfrak{g}, \mathfrak{h}, \phi)$.
\end{exam}

\begin{exam}\label{homo-rep}
Let $(\mathfrak{g}, \mathfrak{h}, \phi)$ and $(\mathfrak{g}', \mathfrak{h}', \phi')$ be two morphism Lie algebras, and $(\alpha, \beta)$ be a homomorphism between them. Then the triple $(\mathfrak{g}', \mathfrak{h}', \phi')$ can be regarded as a representation of the morphism Lie algebra $(\mathfrak{g}, \mathfrak{h}, \phi)$, where $\mathfrak{g}'$ is a representation of $\mathfrak{g}$ via the action map $\rho_{\mathfrak{g}'} : \mathfrak{g} \rightarrow \mathrm{End} (\mathfrak{g}')$, $\rho_{\mathfrak{g}'}(x)( x') = [\alpha (x), x']_{\mathfrak{g}'}$ and $\mathfrak{h}'$ is a representation of $\mathfrak{h}$ via the action map $\rho_{\mathfrak{h}'} : \mathfrak{h} \rightarrow \mathrm{End} (\mathfrak{h}')$, $\rho_{\mathfrak{h}'}(h)( h') = [\beta (h), h']_{\mathfrak{h}'}$.
\end{exam}

\begin{exam}
Let $\mathfrak{g}$ be a Lie algebra. A Rota-Baxter operator of weight $\lambda \in {\bf k}$ on $\mathfrak{g}$ is a linear map $R : \mathfrak{g} \rightarrow \mathfrak{g}$ satisfying
\begin{align}\label{rb-iden}
[R(x), R(y)]_\mathfrak{g} = R \big( [R(x) , y]_\mathfrak{g} + [x, R(y)]_\mathfrak{g} + \lambda [x,y]_\mathfrak{g} \big), ~\text{ for } x, y \in \mathfrak{g}.
\end{align}
Let $R$ be a Rota-Baxter operator of weight $\lambda$. Then $R$ induces a new Lie algebra structure on the underlying vector space $\mathfrak{g}$ with bracket given by
%\begin{align*}
$[x,y]_R :=  [R(x) , y]_\mathfrak{g} + [x, R(y)]_\mathfrak{g} + \lambda [x,y]_\mathfrak{g}, ~\text{ for } x, y \in \mathfrak{g}.$ We denote this Lie algebra by $\mathfrak{g}_R$. It follows from (\ref{rb-iden}) that $(\mathfrak{g}_R, \mathfrak{g}, R)$ is a morphism Lie algebra.

A pair $(\mathfrak{g}, R)$ consisting of a Lie algebra $\mathfrak{g}$ and a Rota-Baxter operator of weight $\lambda$ on it is called a Rota-Baxter Lie algebra (of weight $\lambda$). A representation of a Rota-Baxter Lie algebra $(\mathfrak{g}, R)$ consists of a pair $(V, R_V)$ in which $V$ is a representation of $\mathfrak{g}$ and $R_V : V \rightarrow V$ is a linear map satisfying
\begin{align*}
\rho (R(x)) R_V(v) = R_V \big(   \rho (R(x)) v + \rho(x) R_V(v) + \lambda \rho (x) v \big), \text{ for } x \in \mathfrak{g}, v \in V.
\end{align*}
In this case, $V$ van be equipped with a representation of the Lie algebra $\mathfrak{g}_R$ with the action map $\rho_{R_V} : \mathfrak{g}_R \rightarrow \mathrm{End}(V)$ given by $\rho_{R_V}(x) v :=   \rho (R(x)) v + \rho(x) R_V(v) + \lambda \rho(x) v$, for $x \in \mathfrak{g}_R$ and $v \in V$ \cite{das-weight-lie}. We denote this representation by $V_{R_V}.$ Then it is easy to see that $(V_{R_V}, V, R_V)$ is a representation of the morphism Lie algebra $(\mathfrak{g}_R, \mathfrak{g}, R)$.
%\end{align*}
\end{exam}

\begin{exam}
Let $(\mathfrak{g}, \mathfrak{h}, \phi)$ be a morphism Lie algebra. Suppose $U$ is a representation of $\mathfrak{h}$ with the action map $\rho_U^\mathfrak{h} : \mathfrak{h} \rightarrow \mathrm{End}(U)$. Then $\phi$ induces a representation of $\mathfrak{g}$ on the vector space $U$ with action map $\rho_U^\mathfrak{g} : \mathfrak{g} \rightarrow \mathrm{End}(U),~ \rho_U^\mathfrak{g}(x) u = \rho^\mathfrak{h}_U (\phi (x)) u$, for $x \in \mathfrak{g}$, $u \in U$.

Let $V, W$ be two vector spaces and a $\psi : V \rightarrow W$ be a linear map. We define a representation of $\mathfrak{g}$ on the space $\mathrm{End}(U, V)$ by $\rho_{U,V} : \mathfrak{g} \rightarrow \mathrm{End}(\mathrm{End}(U,V))$, $(\rho_{U, V}(x) f)(u) = - f (\rho_U^\mathfrak{g} (x) u)$, for $x \in \mathfrak{g}$, $f \in \mathrm{End}(U,V)$ and $u \in U$. Similarly, there is a representation of $\mathfrak{h}$ on the space $\mathrm{End}(U, W)$ by $\rho_{U,W} : \mathfrak{h} \rightarrow \mathrm{End}(\mathrm{End}(U,W))$, $(\rho_{U,W} (h) g)(u) = - g (\rho_U^\mathfrak{h} (h) u)$, for $h \in \mathfrak{h}$, $g \in \mathrm{End}(U,W)$ and $u \in U$.  We also define a map $\Psi : \mathrm{End}(U,V) \rightarrow \mathrm{End}(U,W)$ by $\Psi (f) = \psi \circ f$. For $x \in \mathfrak{g}$, $f \in \mathrm{End}(U, V)$ and $u \in U$, we observe that
\begin{align*}
(\Psi (\rho_{U,V} (x) f )) u = \psi (  (\rho_{U,V} (x) f ) u ) = - (\psi \circ f) (\rho_U^\mathfrak{g} (x) u) = - \Psi (f) ( \rho_U^\mathfrak{h} (\phi (x)) u) = (\rho_{U,W} (\phi (x)) \Psi (f)) u.
\end{align*}
This shows that $(\mathrm{End}(U,V), \mathrm{End}(U,W), \Psi)$ is a representation of the morphism Lie algebra $(\mathfrak{g}, \mathfrak{h}, \phi)$.
\end{exam}

\medskip

It is known that a representation of a Lie algebra $\mathfrak{g}$ can be equivalently described as a left module over the universal enveloping algebra $U(\mathfrak{g})$. Note that $U(\mathfrak{g})$ is obtained from the tensor algebra $T(\mathfrak{g})$ quotient by the two-sided ideal $I_\mathfrak{g}$ generated by elements of the form $x \otimes y - y \otimes x - [x,y]_\mathfrak{g}$, for $x, y \in \mathfrak{g}$. More precisely, if $V$ is a left $U(\mathfrak{g})$-module, then the representation of the Lie algebra $\mathfrak{g}$ on $V$ is given by the action map $\rho_V : \mathfrak{g} \rightarrow \mathrm{End}(V)$, $\rho_V (x) v = x \cdot v$, for $x \in \mathfrak{g}, v \in V$. We will generalize this result to representations of morphism Lie algebras.

Let $(\mathfrak{g}, \mathfrak{h}, \phi)$ be a morphism Lie algebra. Then $\phi$ induces an algebra homomorphism $T(\phi) : T (\mathfrak{g}) \rightarrow T (\mathfrak{h})$ between tensor algebras given by
\begin{align*}
T(\phi) (x_1 \otimes \cdots \otimes x_n) = \phi (x_1) \otimes \cdots \otimes \phi (x_n),~ \text{ for } x_1 \otimes \cdots \otimes x_n \in \mathfrak{g}^{\otimes n} \subset T(\mathfrak{g}).
\end{align*}
By composing with the projection $T (\mathfrak{h}) \rightarrow U(\mathfrak{h}) = T(\mathfrak{h})/ I_\mathfrak{h}$, we obtain a map (denoted by the same notation) $T(\phi) : T (\mathfrak{g}) \rightarrow U (\mathfrak{h})$. Since
\begin{align*}
T(\phi) (x \otimes y - y \otimes x - [x, y]_\mathfrak{g}) = \phi(x) \otimes \phi(y) - \phi(y) \otimes \phi(x) - [\phi(x), \phi(y)]_\mathfrak{h} = 0 ~~~(\text{mod } I_\mathfrak{h}),
\end{align*}
the map $T(\phi)$ induces an algebra homomorphism $U(\phi) : U(\mathfrak{g}) \rightarrow U(\mathfrak{h})$. In other words, the triple $(U(\mathfrak{g}), U(\mathfrak{h}), U(\phi))$ is a morphism associative algebra.

A left module over the associative algebra $(U(\mathfrak{g}), U(\mathfrak{h}), U(\phi))$ is a triple $(V, W, \psi)$ in which $V$ is a left $U(\mathfrak{g})$-module, $W$ is a left $U(\mathfrak{h})$-module and $\phi : V \rightarrow W$ is a linear map satisfying $\psi (x \cdot v) = U(\phi)(x) \cdot \psi (v)$, for $x \in U(\mathfrak{g}), v \in V$. Then we have the following result.

\begin{prop}
Let $(\mathfrak{g}, \mathfrak{h}, \phi)$ be a morphism Lie algebra. A representation of $(\mathfrak{g}, \mathfrak{h}, \phi)$ is equivalent to a left module over the morphism associative algebra $(U(\mathfrak{g}), U(\mathfrak{h}), U(\phi))$.
\end{prop}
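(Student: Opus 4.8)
The plan is to unwind the two structures and check that the defining conditions correspond. This is the "morphism algebra" analogue of the classical equivalence between representations of $\mathfrak{g}$ and left $U(\mathfrak{g})$-modules, so the bulk of the argument is to carefully propagate that classical fact through the three components of the data, keeping track of how the compatibility squares match up.

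First I would recall that, by the classical universal enveloping algebra result, giving a representation $\rho_V : \mathfrak{g} \rightarrow \mathrm{End}(V)$ of a Lie algebra is the same as giving a left $U(\mathfrak{g})$-module structure on $V$, the two being related by $\rho_V(x) v = x \cdot v$ for $x \in \mathfrak{g} \subset U(\mathfrak{g})$; likewise for $\mathfrak{h}$ and $W$. Thus starting from a representation $(V, W, \psi)$ of the morphism Lie algebra $(\mathfrak{g}, \mathfrak{h}, \phi)$, the spaces $V$ and $W$ acquire $U(\mathfrak{g})$- and $U(\mathfrak{h})$-module structures, respectively, and it remains only to verify that $\psi(x \cdot v) = U(\phi)(x) \cdot \psi(v)$ holds for all $x \in U(\mathfrak{g})$, $v \in V$. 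Conversely, from a left module $(V, W, \psi)$ over $(U(\mathfrak{g}), U(\mathfrak{h}), U(\phi))$ one recovers action maps $\rho_V$, $\rho_W$ as above, and must check the single identity $\psi(\rho_V(x) v) = \rho_W(\phi(x)) \psi(v)$ for $x \in \mathfrak{g}$, $v \in V$.

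The key step is the equivalence of the two compatibility conditions. In the "$\Rightarrow$" direction, the hypothesis gives $\psi(\rho_V(x) v) = \rho_W(\phi(x)) \psi(v)$ for generators $x \in \mathfrak{g}$, i.e. $\psi(x \cdot v) = \phi(x) \cdot \psi(v) = U(\phi)(x) \cdot \psi(v)$ since $U(\phi)$ restricts to $\phi$ on $\mathfrak{g}$. One then extends this from generators to all of $U(\mathfrak{g})$ by induction on word length: if $\psi(a \cdot v) = U(\phi)(a)\cdot \psi(v)$ for all $v$, then for $x \in \mathfrak{g}$,
\[
\psi\big((x \otimes a)\cdot v\big) = \psi\big(x \cdot (a \cdot v)\big) = U(\phi)(x) \cdot \psi(a \cdot v) = U(\phi)(x) \cdot U(\phi)(a) \cdot \psi(v) = U(\phi)(x \otimes a)\cdot \psi(v),
\]
using that $U(\phi)$ is an algebra homomorphism; since $U(\mathfrak{g})$ is generated as an algebra by the image of $\mathfrak{g}$ (together with $\mathbf{1}$, for which the identity is trivial), this proves the claim. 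The "$\Leftarrow$" direction is immediate: just restrict the module-level identity to $x \in \mathfrak{g} \subset U(\mathfrak{g})$ and rewrite it as $\psi(\rho_V(x)v) = \rho_W(\phi(x))\psi(v)$. Finally one notes that the two passages are mutually inverse, since the underlying bijection between action maps and module structures is already known to be a bijection on each of $V$ and $W$, and $\psi$ is untouched throughout.

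I do not expect a genuine obstacle here; the only point requiring care is the inductive extension of the compatibility square from Lie-algebra generators to the full enveloping algebra, which hinges on $U(\phi)$ being an algebra homomorphism (established in the excerpt) and on $U(\mathfrak{g})$ being generated by $\mathfrak{g}$. Everything else is a direct transcription of the classical Lie-algebra/$U(\mathfrak{g})$-module correspondence applied componentwise.
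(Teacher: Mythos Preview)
Your proposal is correct. The paper in fact states this proposition without proof, treating it as an immediate consequence of the classical equivalence between $\mathfrak{g}$-representations and left $U(\mathfrak{g})$-modules together with the setup it has just given; your argument supplies exactly the routine verification the paper omits, with the inductive extension of the compatibility identity from $\mathfrak{g}$ to all of $U(\mathfrak{g})$ being the only nontrivial (but standard) step.
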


\section{Cohomology of morphism Lie algebras}\label{section-cohomology}

In this section, we introduce the cohomology of a morphism Lie algebra with coefficients in a representation. Our cohomology generalizes the cohomology of Lie algebra homomorphisms defined in \cite{fregier}.

Let $(\mathfrak{g}, \mathfrak{h}, \phi)$ be a morphism Lie algebra and $(V,W,\psi)$ be a representation of it. Then there are two obvious cochain complexes, namely
\begin{itemize}
\item $\{ C^\ast_\mathsf{CE}(\mathfrak{g}, V), \delta'_\mathsf{CE} \}$, the Chevalley-Eilenberg complex of $\mathfrak{g}$ with coefficients in $V$,
\item $\{ C^\ast_\mathsf{CE}(\mathfrak{h}, W), \delta''_\mathsf{CE} \}$, the Chevalley-Eilenberg complex of $\mathfrak{h}$ with coefficients in $W$.
\end{itemize}

\medskip

We define a new map $\rho_W^\mathfrak{g} : \mathfrak{g} \rightarrow \mathrm{End}(W)$ by $\rho_W^\mathfrak{g} (x) w = \rho_W (\phi (x)) w$, for $x \in \mathfrak{g}$ and $w \in W$. For any $x, y \in \mathfrak{g}$ and $w \in W$, we have
\begin{align*}
\rho_W^\mathfrak{g} ([x, y]_\mathfrak{g}) w = \rho_W ([\phi (x), \phi (y)]_\mathfrak{h}) w =~& \rho_W (\phi (x)) \rho_W (\phi (y)) w ~-~ \rho_W (\phi (y)) \rho_W (\phi (x)) w \\
=~& \rho_W^\mathfrak{g} (x) \rho_W^\mathfrak{g}(y) w ~-~ \rho_W^\mathfrak{g} (y) \rho_W^\mathfrak{g}(x) w.
\end{align*}
This shows that $\rho_W^\mathfrak{g}$ defines a representation of the Lie algebra $\mathfrak{g}$ on the vector space $W$. We denote this representation by $W_\phi$. Hence we may consider another cochain complex, namely
\begin{itemize}
\item $\{ C^\ast_\mathsf{CE}(\mathfrak{g}, W_\phi), \delta'''_\mathsf{CE} \}$, the Chevalley-Eilenberg complex of $\mathfrak{g}$ with coefficients in $W_\phi$.
\end{itemize}

We are now in a position to define the cohomology of the morphism Lie algebra $(\mathfrak{g}, \mathfrak{h}, \phi)$ with coefficients in $(V,W, \psi)$. For each $n \geq 0$, we define the $n$-th cochain group $C^n_\mathsf{mLA} (\phi, \psi)$ by
\begin{align*}
C^n_\mathsf{mLA} (\phi, \psi) = \begin{cases}
V & \quad \text{ if } n = 0,\\
C^n_\mathsf{CE}(\mathfrak{g}, V) \oplus C^n_\mathsf{CE}(\mathfrak{h}, W) \oplus C^{n-1}_\mathsf{CE}(\mathfrak{g}, W_\phi) \\
 =\mathrm{Hom} (\wedge^n \mathfrak{g}, V) \oplus \mathrm{Hom}(\wedge^n \mathfrak{h}, W) \oplus \mathrm{Hom}(\wedge^{n-1} \mathfrak{g}, W)   & \quad \text{ if } n \geq 1
 \end{cases}
\end{align*}
and a map $\delta_\mathsf{mLA} : C^n_\mathsf{mLA} (\phi, \psi) \rightarrow C^{n+1}_\mathsf{mLA} (\phi, \psi)$ by
\begin{align*}
\delta_\mathsf{mLA} (v) =~& (\delta'_\mathsf{CE} (v),~ \delta''_\mathsf{CE} (\psi (v)),~ 0), ~ \text{ for } v \in C^0_\mathsf{mLA}(\phi, \psi), \\
\delta_\mathsf{mLA} (\theta, \gamma, \eta) =~& (\delta'_\mathsf{CE}(\theta),~ \delta''_\mathsf{CE}(\gamma),~ \psi \circ \theta - \gamma \circ {\wedge^n \phi} - \delta'''_\mathsf{CE}(\eta)),~ \text{ for } (\theta, \gamma, \eta) \in C^{n \geq 1}_\mathsf{mLA}(\phi, \psi).
\end{align*}

\begin{prop}
With the above notations, we have $(\delta_\mathsf{mLA})^2=0.$
\end{prop}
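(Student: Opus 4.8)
The strategy is to reduce the identity $(\delta_{\mathsf{mLA}})^2 = 0$ to facts we already know about the three Chevalley--Eilenberg complexes that assemble the total complex. First I would record the degreewise structure: a general cochain in degree $n \geq 1$ is a triple $(\theta, \gamma, \eta)$ with $\theta \in C^n_{\mathsf{CE}}(\mathfrak{g}, V)$, $\gamma \in C^n_{\mathsf{CE}}(\mathfrak{h}, W)$, $\eta \in C^{n-1}_{\mathsf{CE}}(\mathfrak{g}, W_\phi)$, and the differential is
$$
\delta_{\mathsf{mLA}}(\theta, \gamma, \eta) = \big(\delta'_{\mathsf{CE}}\theta,~ \delta''_{\mathsf{CE}}\gamma,~ \psi \circ \theta - \gamma \circ \wedge^n \phi - \delta'''_{\mathsf{CE}}\eta\big).
$$
Applying $\delta_{\mathsf{mLA}}$ again and using $(\delta'_{\mathsf{CE}})^2 = (\delta''_{\mathsf{CE}})^2 = (\delta'''_{\mathsf{CE}})^2 = 0$ (these are genuine cochain differentials on the respective Lie algebras, the third by the computation just above showing $W_\phi$ is a $\mathfrak{g}$-representation), the first two slots vanish automatically. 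So everything comes down to showing the third slot of $\delta_{\mathsf{mLA}}^2$ is zero, which after substitution reads
$$
\psi \circ \delta'_{\mathsf{CE}}\theta ~-~ (\delta''_{\mathsf{CE}}\gamma) \circ \wedge^{n+1}\phi ~-~ \delta'''_{\mathsf{CE}}\big(\psi \circ \theta - \gamma \circ \wedge^n \phi - \delta'''_{\mathsf{CE}}\eta\big) = 0.
$$
The term $\delta'''_{\mathsf{CE}}\delta'''_{\mathsf{CE}}\eta$ drops out, leaving three terms to reconcile.

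The key steps are then two compatibility lemmas, each an elementary but slightly tedious verification. First: $\psi \circ \delta'_{\mathsf{CE}}\theta = \delta'''_{\mathsf{CE}}(\psi \circ \theta)$, i.e. $\psi$ is a cochain map from $C^\ast_{\mathsf{CE}}(\mathfrak{g}, V)$ to $C^\ast_{\mathsf{CE}}(\mathfrak{g}, W_\phi)$. This uses exactly the defining relation of a representation of the morphism Lie algebra, $\psi(\rho_V(x)v) = \rho_W(\phi(x))\psi(v) = \rho_{W_\phi}(x)\psi(v)$: pushing $\psi$ through the Chevalley--Eilenberg formula, the bracket-terms are carried by $\psi$ unchanged since they only involve $[\,,\,]_\mathfrak{g}$ and evaluation of $\theta$, and the action-terms $\rho_V(x_i)\theta(\ldots)$ become $\rho_{W_\phi}(x_i)(\psi\theta)(\ldots)$ precisely by the intertwining condition. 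Second: $(\delta''_{\mathsf{CE}}\gamma) \circ \wedge^{n+1}\phi = \delta'''_{\mathsf{CE}}(\gamma \circ \wedge^n \phi)$, i.e. precomposition with the wedge powers of the Lie algebra homomorphism $\phi$ is a cochain map from $C^\ast_{\mathsf{CE}}(\mathfrak{h}, W)$ to $C^\ast_{\mathsf{CE}}(\mathfrak{g}, W_\phi)$. Here one uses that $\phi$ is a Lie algebra homomorphism, so $\phi([x_i,x_j]_\mathfrak{g}) = [\phi(x_i),\phi(x_j)]_\mathfrak{h}$ makes the bracket-terms match, and that the $\mathfrak{g}$-action on $W_\phi$ is by definition $\rho_W \circ \phi$, which makes the action-terms match. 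Granting these two lemmas, the surviving expression becomes $\delta'''_{\mathsf{CE}}(\psi\theta) - \delta'''_{\mathsf{CE}}(\gamma \circ \wedge^n\phi) - \delta'''_{\mathsf{CE}}(\psi\theta - \gamma\circ\wedge^n\phi) = 0$ identically.

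Finally I would handle the degree-$0$ edge case separately: for $v \in C^0_{\mathsf{mLA}}(\phi,\psi) = V$ one has $\delta_{\mathsf{mLA}}v = (\delta'_{\mathsf{CE}}v, \delta''_{\mathsf{CE}}(\psi v), 0)$, and applying $\delta_{\mathsf{mLA}}$ once more gives third slot $\psi\circ\delta'_{\mathsf{CE}}v - \delta''_{\mathsf{CE}}(\psi v)\circ \phi - \delta'''_{\mathsf{CE}}(0)$, which vanishes by the $n=0$ instance of the two lemmas above (the first says $\psi\circ\delta'_{\mathsf{CE}}v = \delta'''_{\mathsf{CE}}(\psi v)$, the second says $\delta''_{\mathsf{CE}}(\psi v)\circ\phi = \delta'''_{\mathsf{CE}}(\psi v)$ since $\wedge^0\phi = \mathrm{id}$ and $\wedge^1\phi = \phi$), while the first two slots vanish as before. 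I do not expect any genuine obstacle here: the whole content is the observation that $\psi$ and $-\circ\wedge^\bullet\phi$ are each cochain maps into the common complex $C^\ast_{\mathsf{CE}}(\mathfrak{g}, W_\phi)$, so the total complex is an honest mapping-cone-type construction. The one place to be careful is keeping the signs and the degree shift in the third summand consistent — the third component sits in $C^{n-1}_{\mathsf{CE}}(\mathfrak{g}, W_\phi)$, so $\delta'''_{\mathsf{CE}}$ raises it correctly to match $C^n_{\mathsf{CE}}$ after one application of $\delta_{\mathsf{mLA}}$ — but this is bookkeeping rather than a real difficulty.
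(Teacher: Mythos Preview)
Your proposal is correct and follows essentially the same route as the paper: compute $(\delta_{\mathsf{mLA}})^2$ componentwise, dispose of the first two slots using $(\delta'_{\mathsf{CE}})^2=(\delta''_{\mathsf{CE}})^2=0$, and reduce the third slot to the two compatibility identities $\delta'''_{\mathsf{CE}}(\psi\circ\theta)=\psi\circ\delta'_{\mathsf{CE}}\theta$ and $\delta'''_{\mathsf{CE}}(\gamma\circ\wedge^n\phi)=(\delta''_{\mathsf{CE}}\gamma)\circ\wedge^{n+1}\phi$, which the paper verifies by exactly the term-by-term computations you describe. The only cosmetic difference is that the paper treats the degree-$0$ case first and by a direct one-line check rather than invoking the $n=0$ instance of the two lemmas.
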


\begin{proof}
For $v \in C^0_\mathsf{mLA}(\phi, \psi)$, 
\begin{align*}
(\delta_\mathsf{mLA})^2 (v) =~& \delta_\mathsf{mLA} \big(  \delta'_\mathsf{CE} (v), ~ \delta''_\mathsf{CE} (\psi(v)),~0 \big) \\
=~& \big(  (\delta'_\mathsf{CE})^2 (v), ~ (\delta''_\mathsf{CE})^2 (\psi(v)),~ \psi \circ \delta'_\mathsf{CE} (v) - \delta''_\mathsf{CE} (\psi (v)) \circ \phi \big) = 0.
\end{align*}
The lase equality holds as $(\psi \circ \delta'_\mathsf{CE} (v)) (x) = \psi (\rho_V(x) v) = \rho_W (\phi (x))\psi (v) = (\delta''_\mathsf{CE} (\psi (v)) \circ \phi) (x)$, for all $x \in \mathfrak{g}$. Similarly, for $(\theta, \gamma, \eta) \in C^{n \geq 1}_\mathsf{mLA} (\phi, \psi),$ we have
\begin{align}\label{d2-iden}
&(\delta_\mathsf{mLA})^2 (\theta, \gamma, \eta) \nonumber \\
&= \delta_\mathsf{mLA} (\delta'_\mathsf{CE}(\theta),~ \delta''_\mathsf{CE}(\gamma),~ \psi \circ \theta - \gamma \circ {\wedge^n \phi} - \delta'''_\mathsf{CE}(\eta)) \nonumber \\
&= ( (\delta'_\mathsf{CE})^2 (\theta),~(\delta''_\mathsf{CE})^2 (\gamma),~\psi \circ \delta'_\mathsf{CE} (\theta) - \delta''_\mathsf{CE} (\gamma) \circ \wedge^{n+1} \phi - \delta'''_\mathsf{CE} (\psi \circ \theta) + \delta'''_\mathsf{CE} (\gamma \circ \wedge^n \phi) + (\delta'''_\mathsf{CE})^2 (\eta) ).
\end{align}
We observe that
\begin{align*}
\delta'''_\mathsf{CE} (\psi \circ \theta) (x_1, \ldots, x_{n+1}) 
=~& \sum_{i=1}^{n+1} (-1)^{i+1} ~\rho_W^\mathfrak{g} (x_i) (\psi \circ \theta)(x_1, \ldots, \widehat{x_i}, \ldots, x_{n+1}) \\
~& + \sum_{1 \leq i < j \leq n+1} (-1)^{i+j} ~ (\psi \circ \theta) ([x_i, x_j]_\mathfrak{g}, x_1, \ldots, \widehat{x_i}, \ldots, \widehat{x_j}, \ldots, x_{n+1} ) \\
=~& \psi \big(  \sum_{i=1}^{n+1} (-1)^{i+1} ~\rho_V (x_i)  \theta (x_1, \ldots, \widehat{x_i}, \ldots, x_{n+1}) \\
~& + \sum_{1 \leq i < j \leq n+1} (-1)^{i+j} ~ (\psi \circ \theta) ([x_i, x_j]_\mathfrak{g}, x_1, \ldots, \widehat{x_i}, \ldots, \widehat{x_j}, \ldots, x_{n+1} ) \big) \\
=~& (\psi \circ \delta'_\mathsf{CE} (\theta)) (x_1, \ldots, x_{n+1})
\end{align*}
and
\begin{align*}
\delta'''_\mathsf{CE} (\gamma \circ \wedge^n \phi) (x_1, \ldots, x_{n+1}) = ~& \sum_{i=1}^{n+1} (-1)^{i+1} ~\rho_W^\mathfrak{g} (x_i) (\gamma \circ \wedge^n \phi)(x_1, \ldots, \widehat{x_i}, \ldots, x_{n+1}) \\
~& + \sum_{1 \leq i < j \leq n+1} (-1)^{i+j} ~ (\gamma \circ \wedge^n \phi) ([x_i, x_j]_\mathfrak{g}, x_1, \ldots, \widehat{x_i}, \ldots, \widehat{x_j}, \ldots, x_{n+1} ) \\
=~& \sum_{i=1}^{n+1} (-1)^{i+1}~ \rho_W (\phi (x_i)) \gamma (\phi (x_1), \ldots, \widehat{\phi (x_i)}, \ldots, \phi (x_{n+1})) \\
~& + \sum_{1 \leq i < j \leq n+1} (-1)^{i+j} ~\gamma ([\phi(x_i), \phi(x_j)]_\mathfrak{h}, \phi (x_1), \ldots, \widehat{\phi(x_i)}, \ldots, \widehat{\phi (x_j)}, \ldots )\\
=~& (\delta''_\mathsf{CE} (\gamma) \circ \wedge^{n+1} \phi) (x_1, \ldots, x_{n+1}).
\end{align*}
Hence it follows from (\ref{d2-iden}) that $(\delta_\mathsf{mLA})^2 = 0$. This completes the proof.
\end{proof}

It follows from the above proposition that $\{ C^\ast_\mathsf{mLA}(\phi, \psi), \delta_\mathsf{mLA} \}$ is a cochain complex. Let $Z^n_\mathsf{mLA}(\phi, \psi)$ denote the space of $n$-cocycles and $B^n_\mathsf{mLA}(\phi, \psi)$  denote the space of $n$-coboundaries. Then we have $B^n_\mathsf{mLA}(\phi, \psi) \subset Z^n_\mathsf{mLA}(\phi, \psi)$. The corresponding quotients
\begin{align}\label{mla-co}
H^n_\mathsf{mLA}(\phi, \psi) := \frac{Z^n_\mathsf{mLA}(\phi, \psi)}{ B^n_\mathsf{mLA}(\phi, \psi)}, ~\text{ for } n \geq 0
\end{align}
are called the cohomology groups of the morphism Lie algebra $(\mathfrak{g}, \mathfrak{h}, \phi)$ with coefficients in the representation $(V,W,\psi)$.

%\textcolor{red}{H0, H1}

It follows from the above definition that
\begin{align*}
H^0_\mathsf{mLA} (\phi, \psi) = \{ v \in V ~|~ \rho_V(x) v = 0 \text{ and } \rho_W (h) \psi (v) = 0, \text{ for all } x \in \mathfrak{g}, h \in \mathfrak{h} \}.
\end{align*}
A triple $(d, \delta, w) \in \mathrm{Hom}(\mathfrak{g}, V) \oplus \mathrm{Hom}(\mathfrak{h}, W) \oplus W$ is said to be a derivation for the morphism Lie algebra $(\mathfrak{g}, \mathfrak{h}, \phi)$ with coefficients in $(V,W,\psi)$ if
\begin{align*}
d ([x, y]_\mathfrak{g}) =~& \rho_V (x) d (y) - \rho_V(y) d (x), ~\text{ for } x, y \in \mathfrak{g}, \\
\delta ([h, k]_\mathfrak{h}) =~& \rho_W (h) \delta (k) - \rho_W (k) \delta (h), ~ \text{ for } h, k \in \mathfrak{h},\\
\rho_W (\phi (x)) w =~& \psi (d(x)) - \delta (\phi (x)), \text{ for } x \in \mathfrak{g}.
\end{align*}
In other words, $d$ and $\delta$ are both derivations on Lie algebras $\mathfrak{g}$ and $\mathfrak{h}$ respectively, and the presence of $w$ obstructs the triviality of $\psi \circ d - \delta \circ \phi$. We denote the set of all derivations by $\mathrm{Der}(\phi, \psi)$. An inner derivation is a derivation of the form $(\delta'_\mathsf{CE} (v),~ \delta''_\mathsf{CE} (\psi (v)),~0)$, for some $v \in V$. The set of all inner derivations are denoted by $\mathrm{InnDer}(\phi, \psi)$. It follows from (\ref{mla-co}) that $H^1_\mathsf{mLA}(\phi, \psi) = \frac{ \mathrm{Der}(\phi, \psi) }{\mathrm{InnDer}(\phi, \psi)}$, called the space of outer derivations.

\begin{remark}
Let $(\mathfrak{g}, \mathfrak{h}, \phi)$ be a morphism Lie algebra. In \cite{fregier} Fr\'{e}gier introduced a cohomology that governs deformation of the morphism Lie algebra $(\mathfrak{g}, \mathfrak{h}, \phi)$. The cohomology of \cite{fregier} can be seen as our cohomology of the morphism Lie algebra $(\mathfrak{g}, \mathfrak{h}, \phi)$ with coefficients in the adjoint representation.
\end{remark}

In the following, we give a sufficient condition for the vanishing of the cohomology groups $H^\ast_\mathsf{mLA}(\phi, \psi)$ in terms of the vanishing of some Chevalley-Eilenberg cohomology groups.

\begin{prop}\label{vanish-prop}
Let $(\mathfrak{g}, \mathfrak{h}, \phi)$ be a morphism Lie algebra and $(V,W, \psi)$ be a representation of it. If $H^n_\mathsf{CE} (\mathfrak{g}, V)$, $H^n_\mathsf{CE} (\mathfrak{h}, W)$ and $H^{n-1}_\mathsf{CE} (\mathfrak{g}, W_\phi)$ are trivial, so is $H^n_\mathsf{mLA} (\phi, \psi)$.
\end{prop}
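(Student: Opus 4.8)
The plan is to treat $\{C^{\ast}_{\mathsf{mLA}}(\phi,\psi),\delta_{\mathsf{mLA}}\}$, in positive degrees, as a mapping cone and then run the usual cocycle--coboundary chase. The key point is that the two computations carried out in the proof of $(\delta_{\mathsf{mLA}})^{2}=0$ say precisely that the assignments $\theta\mapsto\psi\circ\theta$ and $\gamma\mapsto\gamma\circ\wedge^{\bullet}\phi$ are morphisms of cochain complexes $C^{\ast}_{\mathsf{CE}}(\mathfrak{g},V)\to C^{\ast}_{\mathsf{CE}}(\mathfrak{g},W_{\phi})$ and $C^{\ast}_{\mathsf{CE}}(\mathfrak{h},W)\to C^{\ast}_{\mathsf{CE}}(\mathfrak{g},W_{\phi})$ respectively, i.e.\ $\delta'''_{\mathsf{CE}}(\psi\circ\theta)=\psi\circ\delta'_{\mathsf{CE}}(\theta)$ and $\delta'''_{\mathsf{CE}}(\gamma\circ\wedge^{n}\phi)=\delta''_{\mathsf{CE}}(\gamma)\circ\wedge^{n+1}\phi$ for all cochains $\theta$ and $\gamma$. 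I would isolate these two identities at the outset, since they drive everything below, and note that they realize $C^{\ast}_{\mathsf{mLA}}(\phi,\psi)$ (in degrees $\geq 1$) as the cone of $(\theta,\gamma)\mapsto \psi\circ\theta-\gamma\circ\wedge^{\bullet}\phi$.

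Now fix $n\geq 2$ and take a cocycle $(\theta,\gamma,\eta)\in Z^{n}_{\mathsf{mLA}}(\phi,\psi)$, so that $\delta'_{\mathsf{CE}}(\theta)=0$, $\delta''_{\mathsf{CE}}(\gamma)=0$ and $\psi\circ\theta-\gamma\circ\wedge^{n}\phi-\delta'''_{\mathsf{CE}}(\eta)=0$. Since $H^{n}_{\mathsf{CE}}(\mathfrak{g},V)=0$ and $H^{n}_{\mathsf{CE}}(\mathfrak{h},W)=0$, I may pick $\theta'\in C^{n-1}_{\mathsf{CE}}(\mathfrak{g},V)$ and $\gamma'\in C^{n-1}_{\mathsf{CE}}(\mathfrak{h},W)$ with $\delta'_{\mathsf{CE}}(\theta')=\theta$ and $\delta''_{\mathsf{CE}}(\gamma')=\gamma$, and then set
$$\xi:=\psi\circ\theta'-\gamma'\circ\wedge^{n-1}\phi-\eta\ \in\ C^{n-1}_{\mathsf{CE}}(\mathfrak{g},W_{\phi}).$$
Applying the two cochain-map identities and then the third cocycle equation gives $\delta'''_{\mathsf{CE}}(\xi)=\psi\circ\theta-\gamma\circ\wedge^{n}\phi-\delta'''_{\mathsf{CE}}(\eta)=0$, so $\xi\in Z^{n-1}_{\mathsf{CE}}(\mathfrak{g},W_{\phi})$; using $H^{n-1}_{\mathsf{CE}}(\mathfrak{g},W_{\phi})=0$ I pick $\eta'\in C^{n-2}_{\mathsf{CE}}(\mathfrak{g},W_{\phi})$ with $\delta'''_{\mathsf{CE}}(\eta')=\xi$. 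A short direct check then yields $\delta_{\mathsf{mLA}}(\theta',\gamma',\eta')=(\theta,\gamma,\eta)$, so every $n$-cocycle is an $n$-coboundary and $H^{n}_{\mathsf{mLA}}(\phi,\psi)=0$ for $n\geq 2$. Equivalently, this is the statement that in the long exact sequence of the cone,
$$\cdots\to H^{n-1}_{\mathsf{CE}}(\mathfrak{g},W_{\phi})\to H^{n}_{\mathsf{mLA}}(\phi,\psi)\to H^{n}_{\mathsf{CE}}(\mathfrak{g},V)\oplus H^{n}_{\mathsf{CE}}(\mathfrak{h},W)\to H^{n}_{\mathsf{CE}}(\mathfrak{g},W_{\phi})\to\cdots,$$
the group $H^{n}_{\mathsf{mLA}}(\phi,\psi)$ is squeezed between two vanishing terms.

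It remains to inspect the degenerate degrees. For $n=0$ there is nothing to prove: the explicit description of $H^{0}_{\mathsf{mLA}}(\phi,\psi)$ recorded earlier shows it is contained in $H^{0}_{\mathsf{CE}}(\mathfrak{g},V)=0$. The case $n=1$ is the one place requiring extra care, because $C^{0}_{\mathsf{mLA}}(\phi,\psi)=V$ rather than the triple that appears for $n\geq 1$, so the naive cone recipe is slightly off in the bottom degree; running the same construction for a $1$-cocycle produces a correction term lying in $Z^{0}_{\mathsf{CE}}(\mathfrak{g},W_{\phi})=H^{0}_{\mathsf{CE}}(\mathfrak{g},W_{\phi})$, which vanishes by hypothesis, after which one must reconcile the $V$-valued group $C^{0}_{\mathsf{mLA}}$ with the $W$-summand sitting in degree $1$ to conclude that a $1$-cocycle is the coboundary of an element of $V$. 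I expect this low-degree bookkeeping — rather than the positive-degree chase, which is essentially forced once the two cochain-map identities are on the table — to be the main subtlety of the proof.
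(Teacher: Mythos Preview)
Your chase for $n\geq 2$ is exactly the paper's proof: lift $\theta$ and $\gamma$ to primitives $\theta',\gamma'$, observe that $\xi=\psi\circ\theta'-\gamma'\circ\wedge^{n-1}\phi-\eta$ is $\delta'''_{\mathsf{CE}}$-closed, lift it to $\eta'$, and read off $\delta_{\mathsf{mLA}}(\theta',\gamma',\eta')=(\theta,\gamma,\eta)$. The mapping-cone language and the long exact sequence are a pleasant repackaging but not a different argument. Your treatment of $n=0$ is also fine. (Incidentally, the paper writes $\eta_0\in C^{n-1}_{\mathsf{CE}}(\mathfrak{g},W_\phi)$ where it means $C^{n-2}$; your $\eta'\in C^{n-2}$ is the correct index.)

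The gap is at $n=1$, and it is not mere bookkeeping: the statement is actually false there, so the ``reconciliation'' you are hoping for cannot exist. Because $C^{0}_{\mathsf{mLA}}(\phi,\psi)=V$, every $1$-coboundary has the form $\delta_{\mathsf{mLA}}(v)=(\delta'_{\mathsf{CE}}v,\,\delta''_{\mathsf{CE}}(\psi(v)),\,0)$, with third component identically zero. But $1$-cocycles with nonzero third component exist even under the stated hypotheses. Concretely, take $\mathfrak{g}=\mathfrak{h}$ semisimple, $\phi=\mathrm{id}$, $V=W$ a nontrivial irreducible representation, $\psi=\mathrm{id}$. Then $H^{1}_{\mathsf{CE}}(\mathfrak{g},V)=H^{1}_{\mathsf{CE}}(\mathfrak{h},W)=0$ by Whitehead and $H^{0}_{\mathsf{CE}}(\mathfrak{g},W_\phi)=W^{\mathfrak{g}}=0$, so the hypotheses hold; yet for any $\eta\in W$ the triple $(0,\,-\delta''_{\mathsf{CE}}\eta,\,\eta)$ is a $1$-cocycle and is a coboundary only when $\eta=0$. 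Thus $H^{1}_{\mathsf{mLA}}(\phi,\psi)\neq 0$. The paper's proof silently assumes $n\geq 2$ (the primitive $\eta_0$ would have to live in $C^{-1}$ for $n=1$) and does not discuss this case; your instinct that something is off in low degree is correct, but the conclusion to draw is that the proposition as stated holds only for $n\geq 2$.
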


\begin{proof}
Let $(\theta, \gamma, \eta) \in Z^n_\mathsf{mLA}(\phi, \psi)$ be an $n$-cocycle. It follows that $\theta \in Z^n_\mathsf{CE}(\mathfrak{g}, V)$ and $\gamma \in Z^n_\mathsf{CE} (\mathfrak{h}, W)$ are $n$-cocycles in the respective Chevalley-Eilenberg cochain complexes and $\psi \circ \theta - \gamma \circ \wedge^n \phi - \delta'''_\mathsf{CE}(\eta) = 0$. Hence by the hypothesis, there exist $(n-1)$-cochains $\theta_0 \in C^{n-1}_\mathsf{CE}(\mathfrak{g}, V)$ and $\gamma_0 \in C^{n-1}_\mathsf{CE}(\mathfrak{h}, W)$ such that $\theta = \delta'_\mathsf{CE}(\theta_0)$ and $\gamma = \delta''_\mathsf{CE}({\gamma}_0)$. This implies that
\begin{align*}
\delta'''_\mathsf{CE} (\psi \circ \theta_0 - \gamma_0 \circ \wedge^{n-1} \phi - \eta) =~& \psi \circ \delta'_\mathsf{CE} (\theta_0) - \delta''_\mathsf{CE} (\gamma_0) \circ \wedge^n \phi - \delta'''_\mathsf{CE} (\eta) \\
=~& \phi \circ \theta - \gamma \circ \wedge^n \phi - \delta'''_\mathsf{CE} (\eta) = 0.
\end{align*}
In other words, $(\psi \circ \theta_0 - \gamma_0 \circ \wedge^{n-1} \phi - \eta) \in Z^{n-1}_\mathsf{CE} (\mathfrak{g}, W_\phi)$ is a $(n-1)$-cocycle. Hence by the hypothesis, there exists an element $\eta_0 \in C^{n-1}_\mathsf{CE}(\mathfrak{g}, W_\phi)$ such that $(\psi \circ \theta_0 - \gamma_0 \circ \wedge^{n-1} \phi - \eta) = \delta'''_\mathsf{CE}(\eta_0).$ This implies that $(\theta, \gamma, \eta) = \delta_\mathsf{mLA} (\theta_0, \gamma_0, \eta_0) \in B^n_\mathsf{mLA}(\phi, \psi)$ is a $n$-coboundary. Hence the result follows.
\end{proof}

\begin{corollary}
Let $(\mathfrak{g}, \mathfrak{h}, \phi)$ be a morphism Lie algebra and $(V,W, \psi)$ be a representation of it. Suppose $\mathfrak{g}, \mathfrak{h}$ are both semisimple Lie algebras and $V, W$ are nontrivial irreducible representations of $\mathfrak{g}$ and $\mathfrak{h}$, respectively. Additionally, we assume that the representation $W_\phi$ of the Lie algebra $\mathfrak{g}$ is nontrivial and irreducible. Then by the Whitehead's theorem \cite{weibel} and Proposition \ref{vanish-prop}, we get that $H^\ast_\mathsf{mLA} (\phi, \psi) = 0$.
\end{corollary}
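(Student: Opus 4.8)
The plan is to derive the statement from Proposition \ref{vanish-prop} by invoking the classical vanishing theorem of Whitehead for Chevalley--Eilenberg cohomology of semisimple Lie algebras. Recall the relevant form: if $\mathfrak{l}$ is a semisimple Lie algebra over a field of characteristic $0$ and $M$ is a finite-dimensional $\mathfrak{l}$-module, then $H^\ast_\mathsf{CE}(\mathfrak{l}, M) \cong H^\ast_\mathsf{CE}(\mathfrak{l}, {\bf k}) \otimes M^{\mathfrak{l}}$; in particular, when $M$ is a nontrivial irreducible module one has $M^{\mathfrak{l}} = 0$, hence $H^n_\mathsf{CE}(\mathfrak{l}, M) = 0$ for \emph{every} $n \geq 0$ (see \cite{weibel}). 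The standing (implicit) assumption that all the representations in sight are finite-dimensional is exactly what makes this applicable.

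First I would apply this to the three pairs that appear in Proposition \ref{vanish-prop}. Since $\mathfrak{g}$ is semisimple and $V$ is a nontrivial irreducible $\mathfrak{g}$-module, $H^n_\mathsf{CE}(\mathfrak{g}, V) = 0$ for all $n \geq 0$. Since $\mathfrak{h}$ is semisimple and $W$ is a nontrivial irreducible $\mathfrak{h}$-module, $H^n_\mathsf{CE}(\mathfrak{h}, W) = 0$ for all $n \geq 0$. Finally, by the added hypothesis that $W_\phi$ is a nontrivial irreducible representation of the semisimple Lie algebra $\mathfrak{g}$, one also gets $H^n_\mathsf{CE}(\mathfrak{g}, W_\phi) = 0$ for all $n \geq 0$.

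Next, for each $n \geq 1$ the three Chevalley--Eilenberg cohomology groups occurring in the hypothesis of Proposition \ref{vanish-prop}, namely $H^n_\mathsf{CE}(\mathfrak{g}, V)$, $H^n_\mathsf{CE}(\mathfrak{h}, W)$ and $H^{n-1}_\mathsf{CE}(\mathfrak{g}, W_\phi)$, all vanish by the previous paragraph, so the proposition gives $H^n_\mathsf{mLA}(\phi, \psi) = 0$. The degree $0$ case is not covered by that proposition, but it follows at once from the explicit description $H^0_\mathsf{mLA}(\phi, \psi) = \{ v \in V \mid \rho_V(x) v = 0 \text{ and } \rho_W(h)\psi(v) = 0 \text{ for all } x \in \mathfrak{g}, h \in \mathfrak{h} \}$, which is contained in $V^{\mathfrak{g}} = H^0_\mathsf{CE}(\mathfrak{g}, V) = 0$. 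Hence $H^\ast_\mathsf{mLA}(\phi, \psi) = 0$ in all degrees.

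There is essentially no hard step; the argument is a bookkeeping reduction to Proposition \ref{vanish-prop}. The two points requiring a little care are: (i) using the strong form of Whitehead's theorem, which asserts vanishing in \emph{all} degrees for nontrivial irreducible coefficients — not merely the first and second Whitehead lemmas — since Proposition \ref{vanish-prop} consumes input in arbitrarily high degree; and (ii) handling $n = 0$ separately, because the cochain group $C^n_\mathsf{mLA}(\phi, \psi)$ (and hence the shape of the argument in Proposition \ref{vanish-prop}) is different for $n = 0$.
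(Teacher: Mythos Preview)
Your proposal is correct and follows exactly the approach indicated in the paper, which is simply to combine Whitehead's vanishing theorem with Proposition~\ref{vanish-prop}. Your write-up is in fact more careful than the paper's one-line justification: you explicitly note the need for the strong (all-degrees) form of Whitehead's theorem, and you treat the case $n=0$ separately, which the statement of Proposition~\ref{vanish-prop} does not literally cover.
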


\section{Deformation theories}\label{sec-4}
Deformations of morphism Lie algebras was considered by Fr\'{e}gier in \cite{fregier}. In a deformation of a morphism Lie algebra $(\mathfrak{g}, \mathfrak{h}, \phi)$, we simultaneously deform the underlying Lie algebras $\mathfrak{g}, \mathfrak{h}$ and the Lie algebra homomorphism $\phi$. The main result of \cite{fregier} says that the cohomology (with coefficients in the adjoint representation) of the morphism Lie algebra $(\mathfrak{g}, \mathfrak{h}, \phi)$ govern such deformations. In the following, we study some other aspects of deformation problems in the context of morphism Lie algebras.

%\subsection*{Deformations of morphism Lie algebras}

\subsection*{Deformations of homomorphisms between morphism Lie algebras} 

Let $(\mathfrak{g}, \mathfrak{h}, \phi)$ and $(\mathfrak{g}', \mathfrak{h}', \phi')$ be two morphism Lie algebras and $(\alpha, \beta)$ be a homomorphism between them. We have seen in Example \ref{homo-rep} that the triple $(\mathfrak{g}', \mathfrak{h}', \phi')$  can be considered as a representation of the morphism Lie algebra $(\mathfrak{g}, \mathfrak{h}, \phi)$, where the representation of the Lie algebra $\mathfrak{g}$ on the vector space $\mathfrak{g}'$ is given by $\rho_{\mathfrak{g}'} : \mathfrak{g} \rightarrow \mathrm{End}(\mathfrak{g}'), ~\rho_{\mathfrak{g}'} (x) (x') = [\alpha(x), x']_{\mathfrak{g}'}$, and the representation of $\mathfrak{h}$ on the space $\mathfrak{h}'$ is given by $\rho_{\mathfrak{h}'} : \mathfrak{h} \rightarrow \mathrm{End}(\mathfrak{h}'),~ \rho_{\mathfrak{h}'} (h) (h') = [\beta (h), h']_{\mathfrak{h}'}$. Hence we may consider the cochain complex $\{ C^\ast_\mathsf{mLA} (\phi, \phi'), \delta_\mathsf{mLA} \}$, where
\begin{align*}
C^0_\mathsf{mLA} (\phi, \phi') = \mathfrak{g}' \quad \text{ and } \quad C^{n \geq 1}_\mathsf{mLA} (\phi, \phi') = \mathrm{Hom} (\wedge^n \mathfrak{g}, \mathfrak{g}') \oplus  \mathrm{Hom} (\wedge^n \mathfrak{h}, \mathfrak{h}') \oplus  \mathrm{Hom} (\wedge^{n-1} \mathfrak{g}, \mathfrak{h}').
\end{align*}
We will use the corresponding cohomology $H^\ast_\mathrm{mLA} (\phi, \phi')$ to study deformations of the homomorphism $(\alpha, \beta)$.

\begin{defn}
A deformation of $(\alpha, \beta)$ consists of a pair $(\alpha_t, \beta_t)$ of two formal power series
\begin{align*}
\alpha_t = \sum_{i \geq 0} \alpha_i t^i ~\text{ and } ~ \beta_t = \sum_{i \geq 0} \beta_i t^i ~~~ (\text{with }\alpha_i \in \mathrm{Hom}(\mathfrak{g}, \mathfrak{g}'),~ \beta_i \in \mathrm{Hom}(\mathfrak{h}, \mathfrak{h}') \text{ and } \alpha_0 = \alpha,~ \beta_0 = \beta)
\end{align*}
satisfying for $x,y \in \mathfrak{g}$ and $h, k \in \mathfrak{h},$
\begin{align*}
\alpha_t ([x, y]_\mathfrak{g}) = [\alpha_t (x), \alpha_t (y)]_{\mathfrak{g}'}, \quad \beta_t ([h,k]_\mathfrak{h}) = [\beta_t (h), \beta_t (k)]_{\mathfrak{h}'} ~~~ \text{ and } ~~~ \phi' \circ \alpha_t = \beta_t \circ \phi.
\end{align*}
\end{defn}

By equating coefficients of $t$ in each of the above identities, we get
\begin{align*}
\alpha_1 ([x, y]_\mathfrak{g}) =~& [ \alpha (x), \alpha_1 (y)]_{\mathfrak{g}'} + [ \alpha_1 (x), \alpha (y)]_{\mathfrak{g}'},\\
\beta_1 ([h,k]_\mathfrak{h}) =~& [\beta_1 (h), \beta_1 (k)]_{\mathfrak{h}'} + [\beta_1 (h), \beta (k)]_{\mathfrak{h}'},\\
\phi' \circ \alpha_1 =~& \beta_1 \circ \phi.
\end{align*}
The first condition is equivalent to $\delta'_\mathsf{CE} (\alpha_1) (x,y) = 0$, for $x,y \in \mathfrak{g}$, and the second condition is equivalent to $\delta''_\mathsf{CE} (\beta_1) (h, k) = 0$. Therefore, we have
\begin{align*}
\delta_\mathsf{mLA}(\alpha_1, \beta_1, 0) = (  \delta'_\mathsf{CE} (\alpha_1),~\delta''_\mathsf{CE} (\beta_1),~ \phi' \circ \alpha_1 - \beta_1 \circ \phi  ) = 0.
\end{align*}
This shows that $(\alpha_1, \beta_1, 0) \in C^1_\mathsf{mLA} (\phi, \phi')$ is a $1$-cocycle (that is, lies in $Z^1_\mathsf{mLA} (\phi, \phi')$). This is called the infinitesimal (or linear component) of the deformation. In particular, if $(\alpha_1, \beta_1, 0) = \cdots = (\alpha_{n-1}, \beta_{n-1}, 0) = 0$ and $(\alpha_n, \beta_n, 0)$ is nonzero, then $(\alpha_n, \beta_n, 0) \in Z^1_\mathsf{mLA} (\phi, \phi')$ is a $1$-cocycle, called the $n$-th infinitesimal.

\medskip

Next we define an equivalence relation on the space of space of all deformations. Let $G'$ and $H'$ be the unique simply connected Lie groups integrating the Lie algebras $\mathfrak{g}'$ and $\mathfrak{h}'$, respectively. Then the Lie algebra homomorphism $\phi' : \mathfrak{g}' \rightarrow \mathfrak{h}'$ integrates to a Lie group homomorphism $\Phi' : G' \rightarrow H'$.

\begin{defn}
Two deformations $(\alpha_t, \beta_t)$ and $(\alpha'_t, \beta'_t)$ are said to be equivalent if there exists a smooth curve $g_t'$ in $G'$ starting at the identity (i.e. $g_0' = 1_{G'}$) such that
\begin{align}\label{mor-equi-ad}
\alpha_t' (x) = \mathrm{Ad}^{G'}_{g'_t} \circ \alpha_t (x) ~~~ \text{ and } ~~~ \beta_t' (h) = \mathrm{Ad}^{H'}_{\Phi' (g_t')} \circ \beta_t (h), ~ \text{ for } x \in \mathfrak{g}, h \in \mathfrak{h}.
\end{align}
\end{defn}

By differentiating both the identities of (\ref{mor-equi-ad}) at $t=0$, we get
\begin{align*}
(\alpha_1 - \alpha_1')(x) = \delta'_\mathsf{CE} (\frac{d}{dt}|_{t=0}~ g_t')(x)   ~~~ \text{ and } ~~~  (\beta_1 - \beta_1')(h) = \delta''_\mathsf{CE} (\frac{d}{dt}|_{t=0}~ \Phi'(g_t') )(h).
\end{align*}
In other words, $(\alpha_1, \beta_1, 0) - (\alpha_1', \beta_1', 0) = \delta_\mathsf{mLA} (  \frac{d}{dt}|_{t=0}~ g_t'  )$, where the derivative $\frac{d}{dt}|_{t=0}~ g_t' \in \mathfrak{g}' = C^0_\mathsf{mLA}(\phi, \phi')$. As a summary of the above discussions, we get the following.

\begin{prop}\label{prop-def-mor}
Let $(\alpha_t, \beta_t)$ be a deformation of the homomorphism $(\alpha, \beta)$ of morphism Lie algebras. Then the infinitesimal is $1$-cocycle in $Z^1_\mathsf{mLA} (\phi, \phi')$. Moreover, the corresponding cohomology class in $H^1_\mathsf{mLA} (\phi, \phi')$ depends only on the equivalence class of the deformation $(\alpha_t, \beta_t)$.
\end{prop}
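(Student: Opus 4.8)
The statement collects two facts whose first-order content is essentially carried out in the discussion preceding it, so the plan is to organize that bookkeeping into a clean argument: first I verify that the infinitesimal $(\alpha_1,\beta_1,0)$ satisfies $\delta_\mathsf{mLA}(\alpha_1,\beta_1,0)=0$, and then I show that replacing $(\alpha_t,\beta_t)$ by an equivalent deformation in the sense of (\ref{mor-equi-ad}) changes $(\alpha_1,\beta_1,0)$ only by a $\delta_\mathsf{mLA}$-coboundary, so that the induced class in $H^1_\mathsf{mLA}(\phi,\phi')$ is insensitive to the choice within an equivalence class.

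For the first assertion, I would expand each of the three defining identities of the deformation $(\alpha_t,\beta_t)$ in powers of $t$ and read off the coefficient of $t^1$, using $\alpha_0=\alpha$ and $\beta_0=\beta$. With respect to the Chevalley--Eilenberg differentials attached to the representations $\rho_{\mathfrak{g}'}(x)(x')=[\alpha(x),x']_{\mathfrak{g}'}$ and $\rho_{\mathfrak{h}'}(h)(h')=[\beta(h),h']_{\mathfrak{h}'}$ of Example \ref{homo-rep}, the linearizations of $\alpha_t([x,y]_\mathfrak{g})=[\alpha_t(x),\alpha_t(y)]_{\mathfrak{g}'}$ and of $\beta_t([h,k]_\mathfrak{h})=[\beta_t(h),\beta_t(k)]_{\mathfrak{h}'}$ are exactly $\delta'_\mathsf{CE}(\alpha_1)=0$ and $\delta''_\mathsf{CE}(\beta_1)=0$, while the linearization of $\phi'\circ\alpha_t=\beta_t\circ\phi$ is $\phi'\circ\alpha_1-\beta_1\circ\phi=0$. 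Substituting $(\alpha_1,\beta_1,0)$ into the $1$-cochain formula for $\delta_\mathsf{mLA}$ and using $\delta'''_\mathsf{CE}(0)=0$, all three components vanish, so $(\alpha_1,\beta_1,0)\in Z^1_\mathsf{mLA}(\phi,\phi')$; the same computation applied at the lowest nonvanishing order treats the $n$-th infinitesimal.

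For the second assertion, suppose $(\alpha_t,\beta_t)$ and $(\alpha_t',\beta_t')$ are equivalent via a smooth curve $g_t'$ in $G'$ with $g_0'=1_{G'}$, and put $X'=\tfrac{d}{dt}\big|_{t=0}g_t'\in\mathfrak{g}'=C^0_\mathsf{mLA}(\phi,\phi')$. Differentiating the two identities of (\ref{mor-equi-ad}) at $t=0$ by the Leibniz rule, the decisive inputs are $\tfrac{d}{dt}\big|_{t=0}\mathrm{Ad}^{G'}_{g_t'}=\mathrm{ad}_{X'}$ and, since $\Phi'$ integrates $\phi'$, $\tfrac{d}{dt}\big|_{t=0}\Phi'(g_t')=\phi'(X')$. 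This yields $\alpha_1'(x)=[X',\alpha(x)]_{\mathfrak{g}'}+\alpha_1(x)$ and $\beta_1'(h)=[\phi'(X'),\beta(h)]_{\mathfrak{h}'}+\beta_1(h)$, which by antisymmetry of the brackets rearrange to $(\alpha_1-\alpha_1')(x)=[\alpha(x),X']_{\mathfrak{g}'}=(\delta'_\mathsf{CE}X')(x)$ and $(\beta_1-\beta_1')(h)=[\beta(h),\phi'(X')]_{\mathfrak{h}'}=(\delta''_\mathsf{CE}(\phi'(X')))(h)$, the last equalities being the definitions of the $0$-cochain differentials of $\rho_{\mathfrak{g}'}$ and $\rho_{\mathfrak{h}'}$. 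Since the third components of both infinitesimals vanish and the $C^0$-formula gives $\delta_\mathsf{mLA}(X')=(\delta'_\mathsf{CE}(X'),\delta''_\mathsf{CE}(\phi'(X')),0)$, I obtain $(\alpha_1,\beta_1,0)-(\alpha_1',\beta_1',0)=\delta_\mathsf{mLA}(X')\in B^1_\mathsf{mLA}(\phi,\phi')$, so the two infinitesimals represent the same class in $H^1_\mathsf{mLA}(\phi,\phi')$.

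The order-by-order expansion in the first part is routine. The step requiring the most care is the differentiation in the second part: one must correctly identify the infinitesimal effect of conjugation as the inner derivation $\mathrm{ad}_{X'}$, invoke $d\Phi'_{e}=\phi'$ to differentiate $\Phi'(g_t')$, and then match the resulting brackets to the $0$-cochain Chevalley--Eilenberg differentials while respecting the signs built into $\rho_{\mathfrak{g}'}$ and $\rho_{\mathfrak{h}'}$. Conceptually this is what makes the result transparent: although the equivalence in (\ref{mor-equi-ad}) is phrased through a curve in the Lie group $G'$, its linearization is the purely algebraic operation of adding the coboundary $\delta_\mathsf{mLA}(X')$ of the velocity $0$-cochain $X'$, and this is precisely what renders the cohomology class of the infinitesimal an invariant of the equivalence class of $(\alpha_t,\beta_t)$.
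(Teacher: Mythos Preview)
Your proposal is correct and follows essentially the same approach as the paper: the proposition is stated there as a summary of the preceding discussion, which consists precisely of the order-$t$ expansion giving $\delta_\mathsf{mLA}(\alpha_1,\beta_1,0)=0$ and the differentiation of (\ref{mor-equi-ad}) at $t=0$ yielding $(\alpha_1,\beta_1,0)-(\alpha_1',\beta_1',0)=\delta_\mathsf{mLA}\big(\tfrac{d}{dt}\big|_{t=0}g_t'\big)$. Your write-up is in fact more explicit than the paper's about the Leibniz-rule computation, the identification $\tfrac{d}{dt}\big|_{t=0}\mathrm{Ad}^{G'}_{g_t'}=\mathrm{ad}_{X'}$, and the sign bookkeeping matching the brackets to the $0$-cochain differentials.
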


\medskip

Next, we consider the rigidity of a homomorphism between morphism Lie algebras and find a sufficient condition for the rigidity.

\begin{defn}
A homomorphism $(\alpha, \beta)$ is said to be rigid if any deformation $(\alpha_t, \beta_t)$ of $(\alpha, \beta)$ is equivalent to the undeformed one $(\overline{\alpha}_t = \alpha, \overline{\beta}_t = \beta)$.
\end{defn}

\begin{thm}
Let $(\alpha, \beta)$ be a homomorphism of morphism Lie algebras from $(\mathfrak{g}, \mathfrak{h}, \phi)$ to  $(\mathfrak{g}', \mathfrak{h}', \phi')$. If $H^1_\mathsf{mLA} (\phi, \phi') = 0$ then the homomorphism $(\alpha, \beta)$ is rigid.
\end{thm}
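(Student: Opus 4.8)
The plan is to mimic the classical Gerstenhaber-style rigidity argument: show that any deformation of $(\alpha,\beta)$ can be brought, order by order, back to the trivial deformation by a formal change of coordinates coming from $\mathrm{Ad}$-conjugation, using the vanishing of $H^1_\mathsf{mLA}(\phi,\phi')$ to produce the required $0$-cochains at each stage. First I would assume $(\alpha_t,\beta_t)$ is a deformation which is not already trivial, and let $n\geq 1$ be minimal such that $(\alpha_n,\beta_n,0)\neq 0$; by the discussion preceding Proposition~\ref{prop-def-mor} (applied to the $n$-th infinitesimal), $(\alpha_n,\beta_n,0)\in Z^1_\mathsf{mLA}(\phi,\phi')$ is a $1$-cocycle. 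Since $H^1_\mathsf{mLA}(\phi,\phi')=0$, there is an element $v\in\mathfrak{g}'=C^0_\mathsf{mLA}(\phi,\phi')$ with $(\alpha_n,\beta_n,0)=\delta_\mathsf{mLA}(v)=\bigl(\delta'_\mathsf{CE}(v),\ \delta''_\mathsf{CE}(\psi(v)),\ 0\bigr)$, where here $\psi=\phi'$; that is, $\alpha_n=\delta'_\mathsf{CE}(v)$ and $\beta_n=\delta''_\mathsf{CE}(\phi'(v))$ as maps $\mathfrak{g}\to\mathfrak{g}'$ and $\mathfrak{h}\to\mathfrak{h}'$.

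Next I would use $v$ to build an equivalence that kills this leading term. Choose the smooth curve $g'_t=\exp_{G'}(-t^n v)$ in $G'$ (so $g'_0=1_{G'}$ and $\frac{d}{dt}\big|_{t=0}g'_t=0$ while the appropriate higher derivative recovers $v$), and define a new deformation $(\alpha'_t,\beta'_t)$ by $\alpha'_t(x)=\mathrm{Ad}^{G'}_{g'_t}\circ\alpha_t(x)$ and $\beta'_t(h)=\mathrm{Ad}^{H'}_{\Phi'(g'_t)}\circ\beta_t(h)$. By definition this is equivalent to $(\alpha_t,\beta_t)$, and since $\Phi'$ is a Lie group homomorphism integrating $\phi'$, the condition $\phi'\circ\alpha'_t=\beta'_t\circ\phi$ is automatically preserved, so $(\alpha'_t,\beta'_t)$ is again a genuine deformation. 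Expanding $\mathrm{Ad}^{G'}_{\exp(-t^n v)}=\mathrm{Id}-t^n\,\mathrm{ad}_v+O(t^{2n})$ and using $\alpha_0=\alpha$, $\alpha_1=\cdots=\alpha_{n-1}=0$, a direct computation shows $\alpha'_i=0$ for $1\leq i\leq n-1$ and $\alpha'_n=\alpha_n-\mathrm{ad}_v\circ\alpha=\alpha_n-\delta'_\mathsf{CE}(v)=0$; the identical computation on the $\mathfrak{h}'$ side, using $\Phi'(\exp_{G'}(-t^n v))=\exp_{H'}(-t^n\phi'(v))$, gives $\beta'_n=\beta_n-\delta''_\mathsf{CE}(\phi'(v))=0$. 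Thus $(\alpha'_t,\beta'_t)$ agrees with the trivial deformation up to order $n$.

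Finally I would iterate: replace $(\alpha_t,\beta_t)$ by the equivalent deformation $(\alpha'_t,\beta'_t)$, whose first nonvanishing higher term now occurs in degree $\geq n+1$, and repeat. Since equivalence is transitive and at each step we push the order of agreement with the trivial deformation strictly higher, in the formal power series ring the composite change of coordinates converges (each coefficient is eventually stable) and shows $(\alpha_t,\beta_t)$ is equivalent to $(\overline{\alpha}_t=\alpha,\overline{\beta}_t=\beta)$; hence $(\alpha,\beta)$ is rigid. I expect the main obstacle to be the bookkeeping in the expansion of $\mathrm{Ad}^{G'}_{g'_t}\circ\alpha_t$: one must verify carefully that conjugating by $\exp(-t^n v)$ does not disturb the already-vanishing coefficients in degrees $1,\dots,n-1$ and that the degree-$n$ coefficient changes by exactly $-\delta'_\mathsf{CE}(v)$ (and compatibly on $\mathfrak{h}'$), together with a clean formulation of why the iterated sequence of equivalences yields a well-defined limiting equivalence of formal deformations. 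Everything else is a direct consequence of the cocycle/coboundary identities for $\delta_\mathsf{mLA}$ established earlier.
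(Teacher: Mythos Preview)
Your proposal is correct and follows essentially the same route as the paper's proof: both use the cocycle property of the leading infinitesimal, invoke $H^1_\mathsf{mLA}(\phi,\phi')=0$ to write it as $\delta_\mathsf{mLA}(v)$ for some $v\in\mathfrak{g}'$, conjugate by a curve through $1_{G'}$ with the appropriate derivative to kill that term, and then iterate. Your version is slightly more explicit in two respects---you work directly with the $n$-th infinitesimal rather than always the first, and you name the curve $g'_t=\exp_{G'}(-t^n v)$ concretely (whereas the paper only specifies a curve with $\frac{d}{dt}\big|_{t=0}g'_t=x'$)---but these are presentational refinements of the same argument, and the bookkeeping concern you flag about the limiting equivalence is equally present (and equally unaddressed) in the paper's ``by repeating this process''.
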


\begin{proof}
Let $(\alpha_t, \beta_t)$ be any deformation of $(\alpha, \beta)$. Then we know from Proposition \ref{prop-def-mor} that $(\alpha_1, \beta_1, 0) \in Z^1_\mathsf{mLA} (\phi, \phi')$ is a $1$-cocycle. Thus, from the hypothesis, there exists an element $x' \in \mathfrak{g}'$ such that $\alpha_1 = \delta'_\mathsf{CE}(x')$ and $\beta_1 = \delta''_\mathsf{CE} (\phi' (x'))$. Let $g'_t$ be a smooth curve in $G'$ starting at the identity $1_{G'}$ such that $\frac{d}{dt} |_{t=0} g'_t = x'$ and $\frac{d}{dt}|_{t=0} \Phi' (g_t') = \phi' (x')$. We define a deformation $(\alpha_t', \beta_t')$ by $\alpha_t' = \mathrm{Ad}^{G'}_{g_t'} \circ \alpha_t$ and $\beta_t' = \mathrm{Ad}^{H'}_{\Phi'(g_t')} \circ \beta_t$. Then $(\alpha_t', \beta'_t)$ is equivalent to $(\alpha_t, \beta_t)$. Moreover, we have
\begin{align*}
\alpha_1' = \frac{d}{dt}|_{t= 0} \alpha_t' = 0 ~~~~ \text{ and } ~~~~ \beta_1' = \frac{d}{dt}|_{t= 0} \beta_t' = 0.
\end{align*}
Therefore, the linear component $(\alpha_1', \beta_1', 0)$ associated to the deformation $(\alpha_t', \beta_t')$ is null. By repeating this process, we can show that $(\alpha_t, \beta_t)$ is equivalent to $(\overline{\alpha}_t = \alpha, \overline{\beta}_t = \beta)$. This completes the proof.
\end{proof}
\subsection*{Deformations of morphism Lie subalgebras}

Here we study infinitesimal deformation theory of morphism Lie subalgebras in terms of cohomology.

\begin{defn}
Let $(\mathfrak{g}, \mathfrak{h}, \phi)$ be a morphism Lie algebra. A morphism Lie subalgebra of $(\mathfrak{g}, \mathfrak{h}, \phi)$ is a triple $(\mathfrak{p}, \mathfrak{q}, \varphi)$ in which $\mathfrak{p} \hookrightarrow \mathfrak{g}$ and $\mathfrak{q} \hookrightarrow \mathfrak{h}$ are Lie subalgebras and $\varphi : \mathfrak{p} \rightarrow \mathfrak{q}$ is a linear map satisfying $\varphi = \phi|_\mathfrak{p}$. We often write $(\mathfrak{p}, \mathfrak{q}, \varphi) \hookrightarrow (\mathfrak{g}, \mathfrak{h}, \phi)$ to denote a morphism Lie subalgebra.
\end{defn}

It follows that $(\mathfrak{p}, \mathfrak{q}, \varphi)$ is a morphism Lie algebra in its own right and the pair of inclusion maps defines a homomorphism of morphism Lie algebras from $(\mathfrak{p}, \mathfrak{q}, \varphi)$ to $(\mathfrak{g}, \mathfrak{h}, \phi)$.

Let $(\mathfrak{p}, \mathfrak{q}, \varphi) \hookrightarrow (\mathfrak{g}, \mathfrak{h}, \phi)$ be a morphism Lie subalgebra. Note that the composition $\mathfrak{g} \xrightarrow{\phi} \mathfrak{h} \rightarrow \mathfrak{h}/ \mathfrak{q}$ induces a map (denoted by $\phi/ \varphi$) $\mathfrak{g}/ \mathfrak{p} \rightarrow \mathfrak{h} / \mathfrak{q}$. Moreover, there is a representation of the Lie algebra $\mathfrak{p}$ on the space $\mathfrak{g} / \mathfrak{p}$ given by the action map $\rho_{ \mathfrak{g} / \mathfrak{p} } (p) (x \text{ mod } \mathfrak{p}) = [p, x]_\mathfrak{g} \text{ mod } \mathfrak{p}$, for $p \in \mathfrak{p}$ and $x \in \mathfrak{g}$. Similarly, there is a representation of the Lie algebra $\mathfrak{q}$ on the space $\mathfrak{h} / \mathfrak{q}$ with the action map $\rho_{ \mathfrak{h} / \mathfrak{q} } (q) (h \text{ mod } \mathfrak{q}) = [q, h]_\mathfrak{h} \text{ mod } \mathfrak{q}$, for $q \in \mathfrak{q}$ and $h \in \mathfrak{h}$. It is further easy to see that the triple $(\mathfrak{g}/ \mathfrak{p},~ \mathfrak{h}/ \mathfrak{q},~ \phi / \varphi)$ is a representation of the morphism Lie algebra $(\mathfrak{p}, \mathfrak{q}, \varphi)$. Therefore, we may consider the cochain complex $\{ C^\ast_\mathsf{mLA} (\varphi, \phi / \varphi), \delta_\mathsf{mLA} \}$ and the corresponding cohomology groups $H^\ast_\mathsf{mLA} (\varphi, \phi / \varphi)$.

Let $\mathrm{dim }(\mathfrak{p}) = d$ and $\mathrm{dim }(\mathfrak{q}) = l$. We consider the following Grassmannians of $d$-dimensional subspaces of $\mathfrak{g}$ and $l$-dimensional subspaces of $\mathfrak{h}$, i.e.
\begin{align*}
\mathrm{Gr}_d (\mathfrak{g}) =~& \{ P \subset \mathfrak{g} ~|~ P \text{ is a } d\text{-dimensional subspace of } \mathfrak{g} \},\\
\mathrm{Gr}_l (\mathfrak{h}) =~& \{ Q \subset \mathfrak{h} ~|~ Q \text{ is a } l\text{-dimensional subspace of } \mathfrak{h} \}.
\end{align*}

\noindent {\bf Observation.} \cite[Remark 3.13]{crainic} Note that $\mathfrak{p} \in \mathrm{Gr}_d (\mathfrak{g})$. Moreover, the tangent space $T_\mathfrak{p} ( \mathrm{Gr}_d (\mathfrak{g}))$ can be canonically identified with $\mathfrak{p}^* \otimes \mathfrak{g} / \mathfrak{p} = \mathrm{Hom} (\mathfrak{p}, \mathfrak{g}/ \mathfrak{p})$ as follows. If $\mathfrak{p}_t$ is a smooth curve in  $\mathrm{Gr}_d (\mathfrak{g})$ starting at $\mathfrak{p}$, we can find a curve $a_t$ in $\mathrm{GL}(\mathfrak{g})$ starting at the identity and such that $\mathfrak{p}_t = a_t (\mathfrak{p})$. Then the tangent vector $\dot {\mathfrak{p}}_0 = \frac{d}{dt}|_{t=0} \mathfrak{p}_t \in T_\mathfrak{p}  (\mathrm{Gr}_d (\mathfrak{g}))$ can be considered as an element in $\mathfrak{p}^* \otimes \mathfrak{g}/ \mathfrak{p} = \mathrm{Hom}(\mathfrak{p}, \mathfrak{g}/ \mathfrak{p})$ by
\begin{align}\label{p-der}
\dot {\mathfrak{p}}_0 (p) = \frac{d}{dt}|_{t=0} a_t (p) \text{ mod } \mathfrak{p}, ~\text{ for } p \in \mathfrak{p}.
\end{align}
Similarly, the tangent space $T_\mathfrak{q} ( \mathrm{Gr}_l (\mathfrak{h}))$ can be identified with the space $\mathfrak{q}^* \otimes \mathfrak{h} / \mathfrak{q} = \mathrm{Hom} (\mathfrak{q}, \mathfrak{h}/ \mathfrak{q}).$

Let $(\mathfrak{g}, \mathfrak{h}, \phi)$ be a morphism Lie algebra and $(\mathfrak{p}, \mathfrak{q}, \varphi) \hookrightarrow (\mathfrak{g}, \mathfrak{h}, \phi)$ be a morphism Lie subalgebra. Since the map $\varphi$ is the restriction of $\phi$ to the subspace $\mathfrak{p}$, 
the above morphism Lie subalgebra can be written as $(\mathfrak{p}, \mathfrak{q}, \phi|_{\mathfrak{p}})$.

%the presence of $\varphi$ is already implicit in the presence of $\mathfrak{p}$. Therefore, a morphism Lie subalgebra can be thought of as a pair $(\mathfrak{p}, \mathfrak{q})$.

\begin{defn}
Let $(\mathfrak{g}, \mathfrak{h}, \phi)$ be a morphism Lie algebra and $(\mathfrak{p}, \mathfrak{q}, \phi|_\mathfrak{p}) \hookrightarrow (\mathfrak{g}, \mathfrak{h}, \phi)$ be a morphism Lie subalgebra. A deformation of $(\mathfrak{p}, \mathfrak{q}, \phi|_\mathfrak{p})$ consists of a pair $(\mathfrak{p}_t, \mathfrak{q}_t)$ of smooth curves
\begin{align*}
\mathfrak{p}_t \in C^\infty ([0,1], \mathrm{Gr}_d (\mathfrak{g})) ~~~ \text{ and } ~~~
\mathfrak{q}_t \in C^\infty ([0,1], \mathrm{Gr}_l (\mathfrak{h}))
\end{align*}
such that for all $t$,
\begin{itemize}
\item[(i)] $\mathfrak{p}_t$ is a Lie subalgebra of $\mathfrak{g}$ with $\mathfrak{p}_0 = \mathfrak{p}$,
\item[(ii)] $\mathfrak{q}_t$ is a Lie subalgebra of $\mathfrak{h}$ with $\mathfrak{q}_0 = \mathfrak{q}$,
\item[(iii)] $\phi ( \mathfrak{p}_t )\subset \mathfrak{q}_t$ and the map $\phi|_{\mathfrak{p}_t} : \mathfrak{p}_t \rightarrow \mathfrak{q}_t$ is a Lie algebra homomorphism.
\end{itemize}
\end{defn}

Let $G$ and $H$ be the unique simply connected Lie groups integrating the Lie algebras $\mathfrak{g}$ and $\mathfrak{h}$, respectively. The Lie algebra homomorphism $\phi$ then integrates to a Lie group homomorphism $\Phi : G \rightarrow H$.

\begin{defn}\label{defn-eq-sub}
Two deformations $(\mathfrak{p}_t, \mathfrak{q}_t)$ and $(\mathfrak{p}'_t, \mathfrak{q}'_t)$ are said to be equivalent if there exists a smooth curve $g_t$ in $G$ starting at the identity $1_G \in G$ such that $\mathfrak{p}_t' = \mathrm{Ad}^G_{g_t} \mathfrak{p}_t$ and $\mathfrak{q}_t' = \mathrm{Ad}^G_{\Phi(g_t)} \mathfrak{q}_t$.
\end{defn}

\begin{thm}
Let  $(\mathfrak{p}_t, \mathfrak{q}_t)$ be a deformation of a morphism Lie subalgebra  $(\mathfrak{p}, \mathfrak{q}, \phi|_\mathfrak{p})$. Then 
\begin{align*}
(\dot {\mathfrak{p}}_0 ,  \dot {\mathfrak{q}}_0 , 0) \in \mathrm{Hom} (\mathfrak{p}, \mathfrak{g}/ \mathfrak{p}) \oplus \mathrm{Hom} (\mathfrak{q}, \mathfrak{h}/ \mathfrak{q})  \oplus  \mathfrak{h}/ \mathfrak{q}  = C^1_\mathsf{mLA} (\varphi, \phi/ \varphi)
\end{align*}
is a $1$-cocycle, i.e. lies in $Z^1_\mathsf{mLA} (\varphi, \phi/ \varphi)$. Moreover, the corresponding cohomology class in $H^1_\mathsf{mLA} (\varphi, \phi/ \varphi)$ depends only on the equivalence class of the deformation.
\end{thm}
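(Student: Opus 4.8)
The plan is to run the classical argument for deformations of a Lie subalgebra (the Observation above, following \cite{crainic}) twice in parallel, once for $\mathfrak{p}\subset\mathfrak{g}$ and once for $\mathfrak{q}\subset\mathfrak{h}$, and then to read off the extra ``morphism'' component of the cocycle from the compatibility constraint $\phi(\mathfrak{p}_t)\subset\mathfrak{q}_t$. Throughout write $\psi:=\phi/\varphi:\mathfrak{g}/\mathfrak{p}\rightarrow\mathfrak{h}/\mathfrak{q}$ for the induced map, so that the relevant complex is $C^\ast_\mathsf{mLA}(\varphi,\psi)$ attached to the representation $(\mathfrak{g}/\mathfrak{p},\mathfrak{h}/\mathfrak{q},\psi)$ of $(\mathfrak{p},\mathfrak{q},\varphi)$.

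As a first step I would choose, as in the Observation, smooth curves $a_t$ in $\mathrm{GL}(\mathfrak{g})$ and $b_t$ in $\mathrm{GL}(\mathfrak{h})$ with $a_0=\mathrm{Id}_\mathfrak{g}$, $b_0=\mathrm{Id}_\mathfrak{h}$, $\mathfrak{p}_t=a_t(\mathfrak{p})$ and $\mathfrak{q}_t=b_t(\mathfrak{q})$; put $\xi:=\dot a_0\in\mathrm{End}(\mathfrak{g})$ and $\eta:=\dot b_0\in\mathrm{End}(\mathfrak{h})$, so that by (\ref{p-der}) one has $\dot{\mathfrak{p}}_0(p)=\xi(p)\bmod\mathfrak{p}$ and $\dot{\mathfrak{q}}_0(q)=\eta(q)\bmod\mathfrak{q}$, these expressions being independent of the chosen lifts \cite{crainic}.

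Next I would verify $\delta_\mathsf{mLA}(\dot{\mathfrak{p}}_0,\dot{\mathfrak{q}}_0,0)=0$ component by component. Since $\mathfrak{p}_t$ is a subalgebra, $a_t^{-1}[a_t(p),a_t(p')]_\mathfrak{g}\in\mathfrak{p}$ for all $t$ and all $p,p'\in\mathfrak{p}$; differentiating at $t=0$ and reducing modulo $\mathfrak{p}$ gives $\dot{\mathfrak{p}}_0([p,p']_\mathfrak{g})=\rho_{\mathfrak{g}/\mathfrak{p}}(p)\dot{\mathfrak{p}}_0(p')-\rho_{\mathfrak{g}/\mathfrak{p}}(p')\dot{\mathfrak{p}}_0(p)$, that is $\delta'_\mathsf{CE}(\dot{\mathfrak{p}}_0)=0$, and the same computation with $b_t$ gives $\delta''_\mathsf{CE}(\dot{\mathfrak{q}}_0)=0$. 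For the third component --- the one carrying the morphism information --- the constraint $\phi(\mathfrak{p}_t)\subset\mathfrak{q}_t$ reads $(b_t^{-1}\circ\phi\circ a_t)(\mathfrak{p})\subset\mathfrak{q}$ for all $t$, so $t\mapsto(b_t^{-1}\circ\phi\circ a_t)(p)$ is a smooth curve lying in the fixed subspace $\mathfrak{q}$; differentiating at $t=0$ gives $\phi(\xi(p))-\eta(\phi(p))\in\mathfrak{q}$, and reducing modulo $\mathfrak{q}$, using that $\psi$ is induced by $\phi$ and that $\phi(p)=\varphi(p)\in\mathfrak{q}$, yields $\psi(\dot{\mathfrak{p}}_0(p))=\dot{\mathfrak{q}}_0(\varphi(p))$. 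This is exactly $\psi\circ\dot{\mathfrak{p}}_0-\dot{\mathfrak{q}}_0\circ\varphi-\delta'''_\mathsf{CE}(0)=0$, and hence $(\dot{\mathfrak{p}}_0,\dot{\mathfrak{q}}_0,0)\in Z^1_\mathsf{mLA}(\varphi,\psi)$.

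Finally, for the dependence only on the equivalence class, let $(\mathfrak{p}'_t,\mathfrak{q}'_t)$ be equivalent to $(\mathfrak{p}_t,\mathfrak{q}_t)$ via a curve $g_t$ in $G$ with $g_0=1_G$, and set $X:=\frac{d}{dt}|_{t=0}\,g_t\in\mathfrak{g}$, so that $\frac{d}{dt}|_{t=0}\,\Phi(g_t)=\phi(X)$. Then $a'_t:=\mathrm{Ad}^G_{g_t}\circ a_t$ and $b'_t:=\mathrm{Ad}^H_{\Phi(g_t)}\circ b_t$ are admissible lifts of $\mathfrak{p}'_t$ and $\mathfrak{q}'_t$; differentiating gives $\dot a'_0=\mathrm{ad}_X+\xi$ and $\dot b'_0=\mathrm{ad}_{\phi(X)}+\eta$, whence $\dot{\mathfrak{p}}_0(p)-\dot{\mathfrak{p}}'_0(p)=[p,X]_\mathfrak{g}\bmod\mathfrak{p}=\delta'_\mathsf{CE}(\overline X)(p)$ and $\dot{\mathfrak{q}}_0(q)-\dot{\mathfrak{q}}'_0(q)=\delta''_\mathsf{CE}(\overline{\phi(X)})(q)$, where $\overline X:=X\bmod\mathfrak{p}\in\mathfrak{g}/\mathfrak{p}=C^0_\mathsf{mLA}(\varphi,\psi)$ and $\overline{\phi(X)}=\psi(\overline X)$. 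Comparing with $\delta_\mathsf{mLA}(\overline X)=(\delta'_\mathsf{CE}(\overline X),\delta''_\mathsf{CE}(\psi(\overline X)),0)$ gives $(\dot{\mathfrak{p}}_0,\dot{\mathfrak{q}}_0,0)-(\dot{\mathfrak{p}}'_0,\dot{\mathfrak{q}}'_0,0)=\delta_\mathsf{mLA}(\overline X)\in B^1_\mathsf{mLA}(\varphi,\psi)$, so the two deformations define the same class in $H^1_\mathsf{mLA}(\varphi,\psi)$. The two Chevalley--Eilenberg computations are routine (they are essentially those of \cite{crainic}); the only genuinely new point is the third component, and the only care needed there is to observe that $a_t$ and $b_t$ may be chosen independently while $\dot{\mathfrak{p}}_0,\dot{\mathfrak{q}}_0$ do not depend on these choices, so $\phi(\mathfrak{p}_t)\subset\mathfrak{q}_t$ can be differentiated directly --- I do not anticipate any serious obstacle.
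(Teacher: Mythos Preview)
Your argument is correct and follows essentially the same route as the paper: lift $\mathfrak{p}_t,\mathfrak{q}_t$ to curves $a_t,b_t$ in the general linear groups, differentiate the subalgebra conditions to get $\delta'_\mathsf{CE}(\dot{\mathfrak{p}}_0)=\delta''_\mathsf{CE}(\dot{\mathfrak{q}}_0)=0$, handle the third component via the compatibility with $\phi$, and treat equivalence by composing the lifts with $\mathrm{Ad}_{g_t}$ and $\mathrm{Ad}_{\Phi(g_t)}$.

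The one technical difference worth noting concerns the third component. The paper asserts that $a_t$ and $b_t$ may be chosen so that $\phi(a_t(p))=b_t(\phi(p))$ for all $p\in\mathfrak{p}$ and all $t$, and then simply differentiates this equality. You instead keep $a_t,b_t$ independent and differentiate the inclusion $(b_t^{-1}\circ\phi\circ a_t)(\mathfrak{p})\subset\mathfrak{q}$ directly. Your route is arguably cleaner: it avoids having to justify the existence of such compatible lifts (which the paper states without proof), and it makes transparent that the conclusion $\psi\circ\dot{\mathfrak{p}}_0=\dot{\mathfrak{q}}_0\circ\varphi$ is independent of the choice of lifts, since $\dot{\mathfrak{p}}_0$ and $\dot{\mathfrak{q}}_0$ themselves are. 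The paper's coordinated choice, on the other hand, makes the final differentiation marginally shorter. Either way the computation is the same in spirit and yields the same cocycle.
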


\begin{proof}
Let $a_t$ be a smooth curve in $\mathrm{GL} (\mathfrak{g})$ starting at the identity such that $a_t (\mathfrak{p}) = \mathfrak{p}_t$, and $b_t$ be a smooth curve in $\mathrm{GL} (\mathfrak{h})$ such that $b_t (\mathfrak{q}) = \mathfrak{q}_t$. We may choose $a_t$ and $b_t$ in such a way that $\phi (a_t (p)) = b_t (\phi(p))$, for $p \in \mathfrak{p}$ and for all $t$. Let $\overline{a_t} : \mathfrak{g} / \mathfrak{p} \rightarrow \mathfrak{g} / \mathfrak{p}_t$ and $\overline{b_t} : \mathfrak{h} / \mathfrak{q} \rightarrow \mathfrak{h} / \mathfrak{q}_t$ be the induced isomorphisms. For each $t$, we define a map $\sigma_t \in \mathrm{Hom} (\wedge^2 \mathfrak{p}, \mathfrak{g}/ \mathfrak{p})$ by
\begin{align*}
\sigma_t (p,r) := \overline{a_t}^{-1} (  [a_t (p), a_t (r)]_\mathfrak{g} ~\text{mod } \mathfrak{p}_t) = (a_t ^{-1} \circ [a_t (p), a_t (r)]_\mathfrak{g} ) \text{ mod }\mathfrak{p},~ \text{ for } p, r \in \mathfrak{p}.
\end{align*}
Since $\mathfrak{p}_t$ is a Lie subalgebra of $\mathfrak{g}$, we have $\sigma_t = 0$, for all $t$. By differentiating $\sigma_t = 0$ at $t=0$, we obtain
\begin{align*}
(- \dot a_0 [p,r]_\mathfrak{g} + [\dot a_0 (p), r]_\mathfrak{g} + [p, \dot a_0 (r)]_\mathfrak{g} ) \text{ mod } \mathfrak{p} = 0.
\end{align*}
In view of (\ref{p-der}),  the above identity is same as $\delta'_\mathsf{CE} ( \dot {\mathfrak{p}}_0) = 0$. Similarly, one can show that $\delta''_\mathsf{CE} ( \dot {\mathfrak{q}}_0) = 0$. Moreover, for any $p \in \mathfrak{p}$, we have
\begin{align*}
((\phi / \varphi )\circ \dot {\mathfrak{p}}_0 - \dot {\mathfrak{q}}_0 \circ \varphi)(p) =~& ( \phi / \varphi) (\frac{d}{dt}|_{t=0} a_t (p) ~ \text{ mod } \mathfrak{p}) - \frac{d}{dt}|_{t=0} b_t (\phi (p)) ~\text{ mod } \mathfrak{q} \\
=~& \frac{d}{dt}|_{t=0} \big( \phi (a_t (p)) - b_t (\phi(p)) \big)~ \text{ mod } \mathfrak{q} ~= 0.
\end{align*}
Hence $\delta_\mathsf{mLA} ( \dot {\mathfrak{p}}_0, \dot {\mathfrak{q}}_0 , 0 ) = (\delta'_\mathsf{CE} (\dot {\mathfrak{p}}_0),~ \delta''_\mathsf{CE} (\dot {\mathfrak{q}}_0), ~ (\phi / \varphi ) \circ \dot {\mathfrak{p}}_0 - \dot {\mathfrak{q}}_0 \circ \varphi) = 0$.

Next, let $(\mathfrak{p}_t, \mathfrak{q}_t)$ and $(\mathfrak{p}'_t, \mathfrak{q}'_t)$ be two equivalent deformations as of Definition \ref{defn-eq-sub}. Then $a_t' = \mathrm{Ad}^G_{g_t} \circ a_t$ is a curve in $\mathrm{GL}(\mathfrak{g})$ and $b_t' = \mathrm{Ad}^H_{\Phi(g_t)} \circ b_t$ is a curve in $\mathrm{GL}(\mathfrak{h})$ satisfying $a_t' (\mathfrak{p}) = \mathfrak{p}'_t$ and $b_t' (\mathfrak{q}) = \mathfrak{q}'_t$. By differentiating 
$a_t' = \mathrm{Ad}^G_{g_t} \circ a_t$ and  $b_t' = \mathrm{Ad}^H_{\Phi(g_t)} \circ b_t$ at $t=0$ and taking quotient by $\mathfrak{p}$ and $\mathfrak{q}$, respectively, we get
\begin{align*}
\dot {\mathfrak{p}}_0' (p) = \dot {\mathfrak{p}}_0 (p) + [\dot g_0, p]_\mathfrak{g} ~ \text{ mod } \mathfrak{p}  ~~~ \text{ and } ~~~
\dot {\mathfrak{q}}_0' (q) = \dot {\mathfrak{q}}_0 (q) + [\phi (\dot g_0), q]_\mathfrak{h} ~ \text{ mod } \mathfrak{q}, ~\text{ for } p \in \mathfrak{p}, q \in \mathfrak{q}.
\end{align*}
This shows that $  ( \dot {\mathfrak{p}}_0, \dot {\mathfrak{q}}_0 , 0 ) - ( \dot {\mathfrak{p}}'_0, \dot {\mathfrak{q}}'_0 , 0 ) = \delta_\mathsf{mLA} ( \dot g_0)$ is a coboundary. Hence the proof.
\end{proof}

\section{Abelian extensions of morphism Lie algebras}\label{sec-5}
In this section, we study abelian extensions of morphism Lie algebras. Our main result says that isomorphism classes of abelian extensions are classified by the `simple' second cohomology group of morphism Lie algebras.

Let $(\mathfrak{g}, \mathfrak{h}, \phi)$ be a morphism Lie algebra and $(V, W, \psi)$ be a representation of it. A $n$-coboundary in $B^n_\mathsf{mLA}(\phi, \psi)$ is said to be `simple' if it is of the form $\delta_\mathsf{mLA}(d_0, \delta_0, 0)$, for some $(d_0, \delta_0, 0)\in C^{n-1}_\mathsf{mLA}(\phi, \psi)$. The space of all simple $n$-coboundaries are denoted by $B^n_{\mathsf{mLA},s}(\phi, \psi)$. The quotients $ H^n_{\mathsf{mLA},s} := \frac{ Z^n_\mathsf{mLA}(\phi, \psi)  }{  B^n_{\mathsf{mLA},s} (\phi, \psi) },$ for $n \geq 0$, are called the simple cohomology groups of the morphism Lie algebra $(\mathfrak{g}, \mathfrak{h}, \phi)$ with coefficients in $(V, W, \psi).$

\medskip

Let $(\mathfrak{g}, \mathfrak{h}, \phi)$ be a morphism Lie algebra. Let $(V, W , \psi)$ be a triple consisting of two vector spaces $V,W$ and a linear map $\psi : V \rightarrow W$. Note that the triple $(V, W, \psi)$ can be regarded as a morphism Lie algebra with abelian Lie brackets on $V$ and $W$.

\begin{defn}\label{abelian-defn}
(i) An abelian extension of a morphism Lie algebra $(\mathfrak{g}, \mathfrak{h}, \phi)$ by a triple $(V,W, \psi)$ is a short exact sequence of morphism Lie algebras
\begin{align}\label{abelian-diagram}
\xymatrix{
0 \ar[r] & V \ar[r]^i \ar[d]_\psi & \widehat{\mathfrak{g}} \ar[d]^{\widehat{\phi}} \ar[r]^p & \mathfrak{g} \ar[d]^\phi \ar[r]  & 0 \\
0 \ar[r] & W \ar[r]_{\overline{i}} & \widehat{\mathfrak{h}} \ar[r]_{\overline{p}} & \mathfrak{h} \ar[r]  & 0.
}
\end{align}

(ii) Two abelian extensions $(\widehat{\mathfrak{g}}, \widehat{\mathfrak{h}}, \widehat{\phi})$ and $(\widehat{\mathfrak{g}}', \widehat{\mathfrak{h}}', \widehat{\phi}' )$ are said to be isomorphic if there is an isomorphism $(\alpha, \beta)$ of morphism Lie algebras from $(\widehat{\mathfrak{g}}, \widehat{\mathfrak{h}}, \widehat{\phi})$ to $(\widehat{\mathfrak{g}}', \widehat{\mathfrak{h}}', \widehat{\phi}' )$ making the following diagram commutative
\begin{align}
\xymatrixrowsep{0.36cm}
\xymatrixcolsep{0.36cm}
\xymatrix{
0  \ar[rr] & & V \ar[rr]  \ar@{=}[rd] \ar[dd] & & \widehat{\mathfrak{g}} \ar[rd]^{\alpha} \ar[dd] \ar[rr] & & \mathfrak{g} \ar[dd] \ar@{=}[rd] \ar[rr]  & & 0 & \\
 & 0 \ar[rr] & & V \ar[rr] \ar[dd] & & \widehat{\mathfrak{g}'} \ar[dd] \ar[rr] & & \mathfrak{g} \ar[dd] \ar[rr]  & & 0 \\
0 \ar[rr] & & W \ar[rr] \ar@{=}[rd] & & \widehat{\mathfrak{h}} \ar[rd]_\beta \ar[rr] & & \mathfrak{h} \ar@{=}[rd] \ar[rr]  & & 0 & \\
 & 0 \ar[rr] & & W \ar[rr] & & \widehat{\mathfrak{h}'} \ar[rr] & & \mathfrak{h} \ar[rr]  & & 0 .
}
\end{align}
\end{defn}

Let (\ref{abelian-diagram}) be an abelian extension of the morphism Lie algebra $(\mathfrak{g}, \mathfrak{h}, \phi)$ by the triple $(V,W, \psi)$. A section of this abelian extension is a pair $(s, \overline{s})$ of linear maps $s : \mathfrak{g} \rightarrow \widehat{\mathfrak{g}}$ and $\overline{s} : \mathfrak{h} \rightarrow \widehat{\mathfrak{h}}$ such that $p \circ s = \mathrm{id}_\mathfrak{g}$ and $\overline{p} \circ \overline{s} = \mathrm{id}_\mathfrak{h}$. Note that, a section always exists.

Let $(s, \overline{s})$ be a section. Then there is a representation of $\mathfrak{g}$ on the vector space $V$ given by $\rho_V : \mathfrak{g} \rightarrow \mathrm{End}(V)$, $\rho_V(x) v := [s(x), i(v)]_{\widehat{\mathfrak{g}}}$, for $x \in \mathfrak{g}$ and $v \in V$. Note that the right hand side of the above equality lies in $\mathrm{ker }~p = \mathrm{im }~i$. Hence it can be considered as an element in $V$ via the embedding $i: V \rightarrow \widehat{\mathfrak{g}}$. Similarly, there is a representation of $\mathfrak{h}$ on the vector space $W$ given by $\rho_W : \mathfrak{h} \rightarrow \mathrm{End}(W)$, $\rho_W (h) w := [\overline{s}(h), \overline{i}(w)]_{\widehat{\mathfrak{h}}}$, for $h \in \mathfrak{g}$ and $w \in W$. Finally, we observe that
\begin{align*}
&\psi (\rho_V (x) v) - \rho_W (\phi (x)) \psi (v) \\
&= \widehat{ \phi} [s(x), i (v)]_{\widehat{ \mathfrak{g}}} - [\overline{s} \circ \phi (x) , \overline{i} \circ \psi (v)]_{\widehat{ \mathfrak{h}}} \\
&= [\widehat{ \phi} \circ s(x) - \overline{s} \circ \phi (x),~ \overline{i} \circ \psi (v)]_{\widehat{\mathfrak{h}}} = 0 ~~~ (\text{as } \widehat{ \phi} \circ s(x) - \overline{s} \circ \phi (x) \in \mathrm{ker }~ \overline{p} = \mathrm{im }~ \overline{i}).
\end{align*}
This shows that $(V,W,\psi)$ is a representation of the morphism Lie algebra $(\mathfrak{g}, \mathfrak{h}, \phi)$. This representation is infact independent of the choice of the section. To prove this, take another section $(s', \overline{s}')$ of the abelian extension (\ref{abelian-diagram}). We observe that
\begin{align*}
s(x) - s'(x) \in \mathrm{ker }~ p = \mathrm{im }~i ~~~ \text{ and } ~~~  \overline{s}(h) - \overline{s}'(h) \in \mathrm{ker }~ \overline{p} = \mathrm{im }~\overline{i}.
\end{align*}
Hence ~$\rho_V(x) v - \rho'_V (x) v = [s(x) - s'(x), i(v)]_{\widehat{\mathfrak{g}}} = 0$~ and ~$\rho_W (h) w - \rho'_W (h) w := [\overline{s}(h) - \overline{s}'(h), \overline{i}(w)]_{\widehat{\mathfrak{h}}} = 0$. Here $\rho'_V$ (resp. $\rho'_W$) is the representation of the Lie algebra $\mathfrak{g}$ (resp. $\mathfrak{h}$) on the vector space $V$ (resp. $W$) induced by the section $(s', \overline{s}').$

\medskip

Let $(\mathfrak{g}, \mathfrak{h}, \phi)$ be a morphism Lie algebra and $(V,W,\psi)$ be a fixed representation of it. Let $\mathrm{Ext}(\phi, \psi)$ denote the isomorphism classes of abelian extensions of $(\mathfrak{g}, \mathfrak{h}, \phi)$ by the triple $(V,W,\psi)$ so that the induced representation is the prescribed one.

\begin{thm}
Let $(\mathfrak{g}, \mathfrak{h}, \phi)$ be a morphism Lie algebra and $(V,W,\psi)$ be a representation of it. Then there is a one-to-one correspondence between $\mathrm{Ext}(\phi, \psi)$ and the simple second cohomology group $H^2_{\mathsf{mLA},s} (\phi, \psi).$
\end{thm}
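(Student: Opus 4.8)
The plan is to construct explicit maps in both directions between $\mathrm{Ext}(\phi, \psi)$ and $H^2_{\mathsf{mLA},s}(\phi,\psi)$ and check they are mutually inverse. First, given an abelian extension as in \eqref{abelian-diagram} together with a section $(s, \overline{s})$, I would define a $2$-cochain $(\theta, \gamma, \eta) \in C^2_\mathsf{mLA}(\phi,\psi)$ by the usual non-abelian $2$-cocycle recipe: set $\theta(x,y) = [s(x), s(y)]_{\widehat{\mathfrak{g}}} - s([x,y]_\mathfrak{g})$, viewed in $V \cong \mathrm{im}\,i$; set $\gamma(h,k) = [\overline{s}(h), \overline{s}(k)]_{\widehat{\mathfrak{h}}} - \overline{s}([h,k]_\mathfrak{h})$, viewed in $W \cong \mathrm{im}\,\overline{i}$; and set $\eta(x) = \widehat{\phi}(s(x)) - \overline{s}(\phi(x))$, which lies in $\ker \overline{p} = \mathrm{im}\,\overline{i}$ and so is viewed in $W$, i.e.\ $\eta \in \mathrm{Hom}(\mathfrak{g}, W) = C^1_\mathsf{CE}(\mathfrak{g}, W_\phi)$. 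The Jacobi identity in $\widehat{\mathfrak{g}}$ and $\widehat{\mathfrak{h}}$ gives $\delta'_\mathsf{CE}(\theta) = 0$ and $\delta''_\mathsf{CE}(\gamma) = 0$ in the respective Chevalley–Eilenberg complexes (this is the classical computation); and the fact that $\widehat{\phi}$ is a Lie algebra homomorphism, combined with the definitions, yields $\psi \circ \theta - \gamma \circ \wedge^2\phi - \delta'''_\mathsf{CE}(\eta) = 0$. Hence $(\theta,\gamma,\eta) \in Z^2_\mathsf{mLA}(\phi,\psi)$.

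Second, I would show this cocycle is well-defined in $H^2_{\mathsf{mLA},s}(\phi,\psi)$, i.e.\ independent of the chosen section. A different section $(s', \overline{s}')$ satisfies $s'(x) - s(x) = i(d_0(x))$ and $\overline{s}'(h) - \overline{s}(h) = \overline{i}(\delta_0(h))$ for unique linear maps $d_0 : \mathfrak{g} \to V$ and $\delta_0 : \mathfrak{h} \to W$. A direct computation shows the resulting cocycles differ by $\delta_\mathsf{mLA}(d_0, \delta_0, 0)$, which is precisely a \emph{simple} coboundary in $B^2_{\mathsf{mLA},s}(\phi,\psi)$ — this is exactly why the ``simple'' cohomology is the right target; an arbitrary coboundary would correspond to a change of section in the third slot $\eta$, which is not allowed since $\eta$ is determined by $\widehat{\phi}$, $s$, $\overline{s}$. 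I would also check that isomorphic abelian extensions give the same class: an isomorphism $(\alpha, \beta)$ as in the second diagram sends a section of one extension to a section of the other and intertwines all the structure, so the cocycles agree on the nose after this identification.

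Third, for the reverse map, given $(\theta, \gamma, \eta) \in Z^2_\mathsf{mLA}(\phi,\psi)$ I would build an abelian extension: put $\widehat{\mathfrak{g}} = \mathfrak{g} \oplus V$ with bracket $[(x,u),(y,v)]_{\widehat{\mathfrak{g}}} = ([x,y]_\mathfrak{g},\ \rho_V(x)v - \rho_V(y)u + \theta(x,y))$, and $\widehat{\mathfrak{h}} = \mathfrak{h} \oplus W$ with the analogous bracket using $\gamma$; the cocycle conditions $\delta'_\mathsf{CE}\theta = 0$, $\delta''_\mathsf{CE}\gamma = 0$ make these Lie algebras. Define $\widehat{\phi}(x,u) = (\phi(x),\ \psi(u) + \eta(x))$; the remaining cocycle condition $\psi\circ\theta - \gamma\circ\wedge^2\phi - \delta'''_\mathsf{CE}\eta = 0$ is exactly what is needed for $\widehat{\phi}$ to be a Lie algebra homomorphism $\widehat{\mathfrak{g}} \to \widehat{\mathfrak{h}}$, and the representation compatibility $\psi(\rho_V(x)v) = \rho_W(\phi(x))\psi(v)$ handles the cross terms. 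With the obvious inclusions and projections this is an abelian extension realizing the prescribed representation. Finally I would verify the two constructions are inverse: starting from $(\theta,\gamma,\eta)$, the standard section $x \mapsto (x,0)$, $h \mapsto (h,0)$ of the built extension returns the same cocycle; and starting from an extension, choosing a section identifies it (via $x + v \mapsto (p(x+v), \cdot)$) isomorphically with the extension built from its cocycle. Changing the cocycle within its simple-coboundary class corresponds under this to changing the section, hence gives an isomorphic extension.

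The main obstacle I anticipate is bookkeeping around the third component: one must be careful that $\eta$ genuinely lives in $\mathrm{Hom}(\mathfrak{g}, W)$ where $W$ carries the $\mathfrak{g}$-action $W_\phi$ (through $\phi$), and that the sign conventions in $\delta'''_\mathsf{CE}$, in $\wedge^2\phi$, and in the bracket formula for $\widehat{\phi}$ all line up so that ``$\widehat{\phi}$ is a homomorphism'' is \emph{literally} the cocycle equation rather than off by a sign. The rest is the classical extension-theory argument adapted to the triple.
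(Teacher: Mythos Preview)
Your proposal is correct and follows essentially the same route as the paper: the same cocycle $(\theta,\gamma,\eta)$ built from a section, the same semidirect-type construction $\widehat{\mathfrak{g}}=\mathfrak{g}\oplus V$, $\widehat{\mathfrak{h}}=\mathfrak{h}\oplus W$, $\widehat{\phi}(x,v)=(\phi(x),\psi(v)+\eta(x))$ in the reverse direction, and the observation that a change of section produces exactly a simple coboundary $\delta_\mathsf{mLA}(d_0,\delta_0,0)$. If anything, your explanation of \emph{why} the simple cohomology is the correct target and your sketch of mutual invertibility are more detailed than what the paper records.
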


\begin{proof}
Let (\ref{abelian-diagram}) be an abelian extension of the morphism Lie algebra $(\mathfrak{g}, \mathfrak{h}, \phi)$ by the triple $(V,W,\psi)$. Let $(s, \overline{s})$ be a section of it. We define maps
\begin{align*}
&\theta \in \mathrm{Hom} (\wedge^2 \mathfrak{g}, V), ~\theta (x, y) := [s(x), s(y)]_{\widehat{\mathfrak{g}}} - s [x, y]_\mathfrak{g}, \\
&\gamma \in \mathrm{Hom} (\wedge^2 \mathfrak{h}, W), ~\theta (h,k) := [\overline{s}(h), \overline{s}(k)]_{\widehat{\mathfrak{h}}} - \overline{s} [h,k]_\mathfrak{h},\\
&\eta \in \mathrm{Hom} (\mathfrak{g},W),~ \eta (x) := \widehat{\phi} (s(x)) - \overline{s}(\phi (x)).
\end{align*}
Thus, we have $(\theta, \gamma, \eta) \in C^2_\mathsf{mLA}(\phi, \psi).$ It is known from the classical theory of abelian extensions of Lie algebras \cite{weibel} that $\delta'_\mathsf{CE} (\theta) = 0$ and $\delta''_\mathsf{CE} (\gamma) = 0$. Moreover, by a straightforward computation we have $(\psi \circ \theta - \gamma \circ \wedge^2 \phi - \delta'''_\mathsf{CE} (\eta)) = 0$. Hence we have $(\theta, \gamma, \eta) \in Z^2_\mathsf{mLA}(\phi, \psi)$. Therefore $(\theta, \gamma, \eta)$ corresponds to a cohomology class in $H^2_{\mathsf{mLA},s} (\phi, \psi)$. It is easy to verify that this cohomology class does not depend on the choice of section. Next, we let $(\widehat{\mathfrak{g}}, \widehat{\mathfrak{h}}, \widehat{\phi})$ and $(\widehat{\mathfrak{g}}', \widehat{\mathfrak{h}}', \widehat{\phi}')$ be two isomorphic abelian extensions and the isomorphism is given by $(\alpha, \beta)$ (see Definition \ref{abelian-defn} (ii)). Let $(s, \overline{s})$ be a section of the first abelian extension. We have $p' \circ (\alpha \circ s) = p \circ s = \mathrm{id}_\mathfrak{g}$ and $\overline{p}' \circ (\beta \circ \overline{s}) = \overline{p} \circ \overline{s} = \mathrm{id}_\mathfrak{h}$, which shows that $(\alpha \circ s, \beta \circ \overline{s})$ is a section of the second abelian extension. If $(\theta', \gamma', \eta') \in Z^2_\mathsf{mLA} (\phi, \psi)$ is the $2$-cocycle corresponding to the second abelian extension and its section $(\alpha \circ s, \beta \circ \overline{s})$ then
\begin{align*}
\theta' (x,y) =~& [(\alpha \circ s)(x), (\alpha \circ s)(y)]_{\widehat{\mathfrak{g}}'} - (\alpha \circ s) ([x, y]_\mathfrak{g}) \\
=~& \alpha ([s(x), s(y)]_{\widehat{\mathfrak{g}}} - s[x, y]_\mathfrak{g}) = \theta(x, y) \quad (\text{as } \alpha|_V = \mathrm{id}_V).
\end{align*} 
Similarly, one can show that $\gamma' = \gamma$ and $\eta' = \eta$. Hence $(\theta, \gamma, \eta) = (\theta', \gamma', \eta')$. This shows that there is a well-defined map $\Theta_1 : \mathrm{Ext} (\phi, \psi) \rightarrow H^2_{\mathsf{mLA},s} (\phi, \psi)$.

\medskip

Conversely, let $(\theta, \gamma, \eta) \in Z^2_\mathsf{mLA}(\phi, \psi)$ be a $2$-cocycle. Take $\widehat{\mathfrak{g}} = \mathfrak{g} \oplus V$ and $\widehat{\mathfrak{h}} = \mathfrak{h} \oplus W$, and define a map $\widehat{\phi} : \widehat{\mathfrak{g}} \rightarrow \widehat{\mathfrak{h}}$ by $\widehat{ \phi } (x, v) = (\phi (x),~\psi (v) + \eta (x))$, for $(x, v) \in \widehat{\mathfrak{g}}$. We also define bilinear skew-symmetric brackets on $\widehat{\mathfrak{g}}$ and $\widehat{\mathfrak{h}}$ by
\begin{align*}
[(x, v), (x', v')]_{\widehat{\mathfrak{g}}} :=~& ([x, x']_\mathfrak{g},~ \rho_V (x) v' - \rho_V (x') v + \theta (x, x')),\\
[(h, w), (h', w')]_{\widehat{\mathfrak{h}}} :=~& ([h, h']_\mathfrak{h}, ~ \rho_W (h) w' - \rho_W (h') w + \gamma (h, h')),
\end{align*}
for $(x, v), (x', v') \in \widehat{\mathfrak{g}}$ and $(h, w), (h', w') \in \widehat{\mathfrak{h}}$. Since $(\theta, \gamma, \eta)$ is a $2$-cocycle, it follows that $(\widehat{\mathfrak{g}}, \widehat{\mathfrak{h}}, \widehat{\phi})$ is a morphism Lie algebra with the above (Lie) brackets on $\widehat{\mathfrak{g}}$ and $\widehat{\mathfrak{h}}$, respectively. Moreover, this is an abelian extension of the morphism Lie algebra $(\mathfrak{g}, \mathfrak{h}, \phi)$ by the triple $(V, W, \psi)$. Finally, let $(\theta, \gamma, \eta)$ and $(\theta', \gamma', \eta')$ be two $2$-cocycles such that $(\theta, \gamma, \eta) - (\theta', \gamma', \eta') = \delta_\mathsf{mLA}(d_0, \delta_0, 0)$, for some  $(d_0, \delta_0, 0) \in C^1_\mathsf{mLA}(\phi, \psi)$. Then the abelian extensions  $(\widehat{\mathfrak{g}}, \widehat{\mathfrak{h}}, \widehat{\phi})$ and  $(\widehat{\mathfrak{g}}', \widehat{\mathfrak{h}}', \widehat{\phi}')$ are isomorphic via the map $(\alpha, \beta)$, where $\alpha : \widehat{\mathfrak{g}} \rightarrow \widehat{\mathfrak{g}}'$ and $\beta : \widehat{\mathfrak{h}} \rightarrow \widehat{\mathfrak{h}}'$ are the maps
\begin{align*}
\alpha (x, v) = (x, v + d_0(x)) \quad \text{ and } \quad \beta (h, w) = (h, w+ \delta_0 (h)).
\end{align*}
In other words, there is a well-defined map $\Theta_2 : H^2_{\mathsf{mLA},s} (\phi, \psi) \rightarrow \mathrm{Ext}(\phi, \psi)$. Finally, the maps $\Theta_1$ and $\Theta_2$ are inverses to each other. This completes the proof.
\end{proof}

\section{Classification of skeletal morphism sh Lie algebras}\label{sec-6}

A morphism sh Lie algebra is a triple $(\mathcal{G}, \mathcal{H}, \Phi)$ consisting of two sh Lie algebras $\mathcal{G}, \mathcal{H}$ and a homomorphism $\Phi : \mathcal{G} \rightarrow \mathcal{H}$ of sh Lie algebras. In this section, we focus on morphism sh Lie algebras whose underlying sh Lie algebras are concentrated in degrees $0$ and $1$. We first recall some definitions from \cite{baez-crans}.

\begin{defn}
A $2$-term sh Lie algebra $\mathcal{G}$ consists of a chain complex $\mathfrak{g}_1 \xrightarrow{d} \mathfrak{g}_0$ together with a skew-symmetric bilinear map $l_2 : \mathfrak{g}_i \otimes \mathfrak{g}_j \rightarrow \mathfrak{g}_{i+j}$ $(0 \leq i+j \leq 1)$ and a skew-symmetric trilinear map $l_3 : \mathfrak{g}_0 \otimes \mathfrak{g}_0 \otimes \mathfrak{g}_0 \rightarrow \mathfrak{g}_1$ satisfying the following identities
\begin{itemize}
\item[(i)] $dl_2 (x, p) = l_2 (x, dp),$
\item[(ii)] $l_2 (dp, q) = l_2 (p, dq),$
\item[(iii)] $dl_3 (x, y, z) = l_2 (x, l_2 (y, z)) + l_2 (y, l_2 (z,x)) + l_2 (z, l_2 (x,y)),$
\item[(iv)] $l_3 (x, y, dp) = l_2 (x, l_2 (y, p)) + l_2 (y, l_2 (p, x)) + l_2 (p, l_2(x,y)),$
\item[(v)] $l_2 (x, l_3 (y, z, t)) - l_2 (y, l_3 (x,z,t)) + l_2 (z, l_3 (x,y,t)) - l_2 (t, l_3(x,y,z)) 
- l_3 (l_2 (x,y), z, t) \\+ l_3 (l_2 (x, z), y, t) - l_3 (l_2 (x, t), y, z)  - l_3 (l_2 (y,z), x, t) + l_3 (l_2 (y, t), x, z) - l_3 (l_2 (z, t), x, y) = 0,$
\end{itemize}
for $x, y, z, t \in \mathfrak{g}_0$ and $p, q \in \mathfrak{g}_1$. We denote a $2$-term sh Lie algebra by the tuple $\mathcal{G} = (  \mathfrak{g}_1 \xrightarrow{d} \mathfrak{g}_0, l_2, l_3 ).$
\end{defn}

\begin{defn}\label{sh-homo}
Let $\mathcal{G} = (  \mathfrak{g}_1 \xrightarrow{d} \mathfrak{g}_0, l_2, l_3 )$ and $\mathcal{H} = (  \mathfrak{h}_1 \xrightarrow{d'} \mathfrak{h}_0, l'_2, l'_3 )$ be $2$-term sh Lie algebras. A homomorphism $\Phi : \mathcal{G} \rightarrow \mathcal{H}$ is a tuple $\Phi = (\phi_0, \phi_1, \phi_2)$ of linear maps $\phi_0 : \mathfrak{g}_0 \rightarrow \mathfrak{h}_0$, $\phi_1 : \mathfrak{g}_1 \rightarrow \mathfrak{h}_1$ and a skew-symmetric bilinear map $\phi_2 : \mathfrak{g}_0 \otimes \mathfrak{g}_0 \rightarrow \mathfrak{h}_1$ satisfying for $x, y, z \in \mathfrak{g}_0$ and $p \in \mathfrak{g}_1$,
\begin{itemize}
\item[(i)] $\phi_0 \circ d = d' \circ \phi_1$,
\item[(ii)] $d \phi_2 (x,y) = \phi_0 (l_2 (x, y)) - l_2' (\phi_0 (x), \phi_0 (y)),$
\item[(iii)] $\phi_2 (x, d p) = \phi_1 (l_2 (x, p)) - l_2' (\phi_0 (x), \phi_1 (p)),$
\item[(iv)] $l_2' (\phi_0 (x), \phi_2 (y, z)) + c.p. + \phi_2 (x, l_2 (y, z)) + c. p. = \phi_1 (l_3 (x, y, z)) - l_3' (\phi_0 (x), \phi_0 (y), \phi_0 (z)).$
\end{itemize} 
\end{defn}

\begin{defn}
A $2$-term morphism sh Lie algebra is a triple $(\mathcal{G}, \mathcal{H}, \Phi)$ in which $\mathcal{G} =  (  \mathfrak{g}_1 \xrightarrow{d} \mathfrak{g}_0, l_2, l_3 )$ and  $\mathcal{H} = (  \mathfrak{h}_1 \xrightarrow{d'} \mathfrak{h}_0, l'_2, l'_3 )$ are $2$-term sh Lie algebras and $\Phi = (\phi_0, \phi_1, \phi_2) : \mathcal{G} \rightarrow \mathcal{H}$ is a homomorphism between them.
\end{defn}

A skeletal morphism sh Lie algebra is a $2$-term morphism sh Lie algebra $(\mathcal{G}, \mathcal{H}, \Phi)$ in which $d= 0$ and $d' = 0$. In other words, the $2$-term sh Lie algebras $\mathcal{G}$ and $\mathcal{H}$ are skeletal in the sense of \cite{baez-crans}. 

\medskip

Let $(\mathcal{G} = (\mathfrak{g}_1 \xrightarrow{0} \mathfrak{g}_0, l_2, l_3) ,~ \mathcal{H} = (\mathfrak{h}_1 \xrightarrow{0} \mathfrak{h}_0, l'_2, l'_3),~ \Phi = (\phi_0, \phi_1, \phi_2))$ be a skeletal morphism sh Lie algebra. Suppose there are skew-symmetric bilinear maps $\sigma : \mathfrak{g}_0 \otimes \mathfrak{g}_0 \rightarrow \mathfrak{g}_1$ and $\sigma' : \mathfrak{h}_0 \otimes \mathfrak{h}_0 \rightarrow \mathfrak{h}_1$, and a linear map $\phi : \mathfrak{g}_0 \rightarrow \mathfrak{h}_1$. We define two skew-symmetric trilinear  maps and a skew-symmetric bilinear map
%\begin{align*}
$$\underline{l_3} : \mathfrak{g}_0 \otimes \mathfrak{g}_0 \otimes \mathfrak{g}_0 \rightarrow \mathfrak{g}_1, \qquad
\underline{l'_3} : \mathfrak{h}_0 \otimes \mathfrak{h}_0 \otimes \mathfrak{h}_0 \rightarrow \mathfrak{h}_1 \quad ~~~ \text{ and } ~~~
\underline{\phi_2} : \mathfrak{g}_0 \otimes \mathfrak{g}_0 \rightarrow \mathfrak{h}_1$$
%\end{align*}
by
\begin{align}
&\underline{l_3} (x, y, z) := l_3 (x, y, z) + \{ l_2 (x, \sigma (y, z)) + c.p. \} + \{ \sigma (x, l_2 (y, z)) + c. p. \}, \label{sk-eq1}\\
&\underline{l'_3} (h, j, k) := l'_3 (h, j, k) + \{ l'_2 (h, \sigma' (j, k)) + c. p. \} + \{   \sigma' (h, l'_2 (j, k)) + c. p. \}, \label{sk-eq2}\\
&\underline{\phi_2} (x, y) := \phi_2 (x, y) + \phi_1 (\sigma (x,y)) - \sigma' (\phi_0 (x), \phi_0 (y)) - l_2' (\phi_0 (x), \phi (y)) - l_2' (\phi(x), \phi_0 (y)) + \phi (l_2 (x, y)), \label{sk-eq3}
\end{align}
for $x, y, z \in \mathfrak{g}_0$ and $h, j, k \in \mathfrak{h}_0$. Then it is easy to see that $$(\underline{\mathcal{G}} = (\mathfrak{g}_1 \xrightarrow{0} \mathfrak{g}_0, l_2, \underline{l_3}) ,~ \underline{\mathcal{H}} = (\mathfrak{h}_1 \xrightarrow{0} \mathfrak{h}_0, l'_2, \underline{l'_3}),~ \underline{\Phi} = (\phi_0, \phi_1, \underline{\phi_2}))$$
is a skeletal morphism sh Lie algebra. In this case, we say that the skeletal morphism sh Lie algebras $(\mathcal{G}, \mathcal{H}, \Phi)$ and $(\underline{\mathcal{G}}, \underline{\mathcal{H}}, \underline{\Phi})$ are equivalent.
In the following result, we show that (equivalence classes of) skeletal morphism sh Lie algebras are closely related to the cohomology of morphism Lie algebras introduced in Section \ref{section-cohomology}.

\begin{thm}
There is a one-to-one correspondence between skeletal morphism sh Lie algebras and triples of the form $( (\mathfrak{g}, \mathfrak{h}, \phi), (V,W,\psi), (\theta, \gamma, \eta))$ in which $(\mathfrak{g}, \mathfrak{h}, \phi)$ is a morphism Lie algebra, $(V,W, \psi)$ is a representation and $(\theta, \gamma, \eta) \in Z^3_\mathsf{mLA} (\phi, \psi)$ is a $3$-cocycle.

Moreover, it extends to a one-to-one correspondence between  equivalence classes of skeletal morphism sh Lie algebras and triples $( (\mathfrak{g}, \mathfrak{h}, \phi), (V,W,\psi), [(\theta, \gamma, \eta)])$, where $[(\theta, \gamma, \eta)] \in H^3_\mathsf{mLA}(\phi, \psi)$.
\end{thm}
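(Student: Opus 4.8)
The plan is to observe that, once the defining axioms are unwound under the hypothesis $d = 0 = d'$, a skeletal morphism sh Lie algebra is \emph{literally} the same package of data as one of the triples in the statement; the cohomological refinement then follows by unwinding the equivalence relation of \eqref{sk-eq1}--\eqref{sk-eq3}.

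First I would set up the dictionary. Starting from a skeletal morphism sh Lie algebra $(\mathcal{G}, \mathcal{H}, \Phi)$ with $\mathcal{G} = (\mathfrak{g}_1 \xrightarrow{0} \mathfrak{g}_0, l_2, l_3)$, $\mathcal{H} = (\mathfrak{h}_1 \xrightarrow{0} \mathfrak{h}_0, l'_2, l'_3)$ and $\Phi = (\phi_0, \phi_1, \phi_2)$, put $\mathfrak{g} := \mathfrak{g}_0$, $V := \mathfrak{g}_1$, $\mathfrak{h} := \mathfrak{h}_0$, $W := \mathfrak{h}_1$, $\phi := \phi_0$, $\psi := \phi_1$ and $(\theta, \gamma, \eta) := (l_3, l'_3, \phi_2)$. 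With $d = 0$, axioms (i) and (ii) of a $2$-term sh Lie algebra are vacuous; (iii) is precisely the Jacobi identity making $(\mathfrak{g}, l_2|_{\mathfrak{g}\otimes\mathfrak{g}})$ a Lie algebra; (iv) says that $\rho_V(x)v := l_2(x,v)$ defines a representation of $\mathfrak{g}$ on $V$; and (v) is exactly the Chevalley--Eilenberg cocycle condition $\theta \in Z^3_\mathsf{CE}(\mathfrak{g}, V)$ --- this last equivalence is the skeletal Lie $2$-algebra computation of Baez--Crans \cite{baez-crans}, which I would simply cite. The same analysis for $\mathcal{H}$ produces the Lie algebra $\mathfrak{h}$, its representation $W$ via $\rho_W$, and $\gamma \in Z^3_\mathsf{CE}(\mathfrak{h}, W)$. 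For the homomorphism $\Phi$ (Definition \ref{sh-homo}), with $d = 0 = d'$ axiom (i) is vacuous, axiom (ii) says $\phi$ is a Lie algebra homomorphism, and axiom (iii) says $\psi(\rho_V(x)v) = \rho_W(\phi(x))\psi(v)$, so that $(V, W, \psi)$ is a representation of the morphism Lie algebra $(\mathfrak{g}, \mathfrak{h}, \phi)$.

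The crux is identifying axiom (iv) of Definition \ref{sh-homo}. Recalling that the representation $W_\phi$ of $\mathfrak{g}$ on $W$ is $\rho_W \circ \phi$ (which in the present situation is $l'_2(\phi_0(x), -)$), a direct expansion --- in which one uses skew-symmetry of $\phi_2$ to rewrite the two cyclic-permutation sums as the six terms of the Chevalley--Eilenberg differential --- gives
\begin{align*}
l'_2(\phi_0(x), \phi_2(y,z)) + c.p. + \phi_2(x, l_2(y,z)) + c.p. = \delta'''_\mathsf{CE}(\phi_2)(x,y,z),
\end{align*}
so that axiom (iv) reads $\psi \circ \theta - \gamma \circ \wedge^3\phi - \delta'''_\mathsf{CE}(\eta) = 0$. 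Together with $\delta'_\mathsf{CE}(\theta) = 0$ and $\delta''_\mathsf{CE}(\gamma) = 0$ this is precisely the assertion $(\theta, \gamma, \eta) \in Z^3_\mathsf{mLA}(\phi, \psi)$. Conversely, given such a triple I would define $\mathcal{G} = (V \xrightarrow{0} \mathfrak{g}, l_2, \theta)$ with $l_2$ equal to the bracket on $\mathfrak{g}\otimes\mathfrak{g}$, the action on $\mathfrak{g}\otimes V$, and $0$ on $V\otimes V$; analogously $\mathcal{H}$; and $\Phi = (\phi, \psi, \eta)$. Each defining axiom is then exactly one of the relations just derived, so this is a skeletal morphism sh Lie algebra, and the two assignments are manifestly mutually inverse --- the correspondence is simply a repackaging of the same set of data. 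This establishes the first bijection.

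For the second statement I would unwind the equivalence of \eqref{sk-eq1}--\eqref{sk-eq3}, writing its auxiliary data as $\sigma \in \mathrm{Hom}(\wedge^2\mathfrak{g}, V)$, $\sigma' \in \mathrm{Hom}(\wedge^2\mathfrak{h}, W)$ and $t \in \mathrm{Hom}(\mathfrak{g}, W)$ (where $t$ is the linear map denoted $\phi$ in \eqref{sk-eq3}). The identity $\{l_2(x, \sigma(y,z)) + c.p.\} + \{\sigma(x, l_2(y,z)) + c.p.\} = \delta'_\mathsf{CE}(\sigma)(x,y,z)$ turns \eqref{sk-eq1} into $\underline{\theta} = \theta + \delta'_\mathsf{CE}(\sigma)$; likewise \eqref{sk-eq2} gives $\underline{\gamma} = \gamma + \delta''_\mathsf{CE}(\sigma')$; and expanding \eqref{sk-eq3}, using skew-symmetry of $l'_2$ and the formula for $\delta'''_\mathsf{CE}$ on $1$-cochains, gives $\underline{\eta} = \eta + \psi\circ\sigma - \sigma'\circ\wedge^2\phi - \delta'''_\mathsf{CE}(t)$. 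Hence $(\underline{\theta}, \underline{\gamma}, \underline{\eta}) = (\theta, \gamma, \eta) + \delta_\mathsf{mLA}(\sigma, \sigma', t)$. Since an equivalence fixes the underlying chain complexes together with $l_2, l'_2, \phi_0, \phi_1$ --- hence the underlying morphism Lie algebra and representation --- two skeletal morphism sh Lie algebras are equivalent precisely when their $3$-cocycles are cohomologous in $Z^3_\mathsf{mLA}(\phi,\psi)$, and every coboundary $\delta_\mathsf{mLA}(\sigma, \sigma', t)$ is realized by such an equivalence. Therefore the first bijection descends to the asserted bijection between equivalence classes of skeletal morphism sh Lie algebras and triples $((\mathfrak{g}, \mathfrak{h}, \phi), (V, W, \psi), [(\theta, \gamma, \eta)])$ with $[(\theta, \gamma, \eta)] \in H^3_\mathsf{mLA}(\phi, \psi)$. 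I expect the only genuine obstacle to be bookkeeping: fixing signs and cyclic-permutation conventions in the two key identities (axiom (iv) versus $\delta'''_\mathsf{CE}(\phi_2)$, and \eqref{sk-eq1}--\eqref{sk-eq3} versus $\delta_\mathsf{mLA}(\sigma, \sigma', t)$), since everything else is either vacuous, standard Lie-algebra bookkeeping, or already contained in \cite{baez-crans}.
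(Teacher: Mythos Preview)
Your proposal is correct and follows essentially the same route as the paper's proof: the same dictionary $(\mathfrak{g},\mathfrak{h},\phi,V,W,\psi,\theta,\gamma,\eta) \leftrightarrow (\mathfrak{g}_0,\mathfrak{h}_0,\phi_0,\mathfrak{g}_1,\mathfrak{h}_1,\phi_1,l_3,l'_3,\phi_2)$, the same appeal to \cite{baez-crans} for the skeletal sh Lie algebra pieces, the same identification of axiom (iv) of Definition~\ref{sh-homo} with the third component of the cocycle condition, and the same recognition of \eqref{sk-eq1}--\eqref{sk-eq3} as $\delta_\mathsf{mLA}(\sigma,\sigma',t)$. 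If anything, you are slightly more explicit than the paper in noting that the two assignments are mutually inverse and that every coboundary is realized by an equivalence, which cleanly closes the bijection at the level of equivalence classes.
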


\begin{proof}
Let $(\mathcal{G}, \mathcal{H}, \Phi)$ be a skeletal morphism sh Lie algebra, where $\mathcal{G} = (\mathfrak{g}_1 \xrightarrow{0} \mathfrak{g}_0, l_2, l_3)$, $\mathcal{H} = (\mathfrak{h}_1 \xrightarrow{0} \mathfrak{h}_0, l'_2, l'_3)$ and $\Phi = (\phi_0, \phi_1, \phi_2)$. Since $\mathcal{G} = (\mathfrak{g}_1 \xrightarrow{0} \mathfrak{g}_0, l_2, l_3)$ is a skeletal $2$-term sh Lie algebra, it follows from \cite[Theorem 6.7]{baez-crans} that $\mathfrak{g}_0$ is a Lie algebra with the Lie bracket $[x,y]_{\mathfrak{g}_0} := l_2 (x, y)$, for $x, y \in \mathfrak{g}_0$, and $\mathfrak{g}_1$ is a representation of the Lie algebra $\mathfrak{g}_0$ with the action map $\rho_{\mathfrak{g}_1} : \mathfrak{g}_0 \rightarrow \mathrm{End}(\mathfrak{g}_1)$ given by $\rho_{\mathfrak{g}_1}(x)(p) := l_2 (x, p)$, for $x \in \mathfrak{g}_0$, $p \in \mathfrak{g}_1$. Moreover, the skew-symmetric trilinear map $l_3 \in \mathrm{Hom}(\wedge^3 \mathfrak{g}_0, \mathfrak{g}_1)$ is a $3$-cocycle in the Chevalley-Eilenberg cohomology complex of $\mathfrak{g}_0$ with coefficients in the representation $\mathfrak{g}_1$. Similarly, since $\mathcal{H} = (\mathfrak{h}_1 \xrightarrow{0} \mathfrak{h}_0, l'_2, l'_3)$ is a skeletal $2$-term sh Lie algebra, we have that $\mathfrak{h}_0$ is a Lie algebra with the bracket $[h,k]_{\mathfrak{h}_0} := l'_2 (h,k)$, for $h, k \in \mathfrak{h}_0$; the space $\mathfrak{h}_1$ is a representation of the Lie algebra $\mathfrak{h}_0$ with the action map $\rho_{\mathfrak{h}_1} : \mathfrak{h}_0 \rightarrow \mathrm{End}(\mathfrak{h}_1)$, $\rho_{\mathfrak{h}_1} (h) (r) := l'_2 (h, r)$, for $h \in \mathfrak{h}_0, r \in \mathfrak{h}_1$; and $l'_3 \in \mathrm{Hom}(\wedge^3 \mathfrak{h}_0, \mathfrak{h}_1)$ is a $3$-cocycle in the Chevalley-Eilenberg complex of $\mathfrak{h}_0$ with coefficients in $\mathfrak{h}_1$. Finally, since $\Phi = (\phi_0, \phi_1, \phi_2) : \mathcal{G} \rightarrow \mathcal{H}$ is a homomorphism of $2$-term sh Lie algebras, we have from Definition \ref{sh-homo} that
\begin{align}
&\phi_0 ([x, y]_{\mathfrak{g}_0}) = [\phi_0 (x), \phi_0 (y)]_{\mathfrak{h}_0}, \label{sk1}\\
&\phi_1 (\rho_{\mathfrak{g}_1} (x)(p) ) = \rho_{\mathfrak{h}_1} (\phi_0 (x)) \phi_1(p), \label{sk2}\\
&\rho_{\mathfrak{h}_1} (\phi_0 (x)) \phi_2 (y, z) + c. p. + \phi_2 (x, [y, z]_{\mathfrak{g}_0}) + c. p. = (\phi_1 \circ l_3 - l'_3 \circ \wedge^3 \phi_0) (x, y, z), \label{sk3}
\end{align}
for $x, y, z \in \mathfrak{g}_0$ and $p \in \mathfrak{g}_1$. The condition (\ref{sk1}) says that $\phi_0 : \mathfrak{g}_0 \rightarrow \mathfrak{h}_0$ is a Lie algebra homomorphism. In other words, $(\mathfrak{g}_0, \mathfrak{h}_0, \phi_0)$ is a morphism Lie algebra. The condition (\ref{sk2}) says that the triple $(\mathfrak{g}_1, \mathfrak{h}_1, \phi_1)$ is a representation of the morphism Lie algebra $(\mathfrak{g}_0, \mathfrak{h}_0, \phi_0)$. 

It follows that we may consider the representation of the Lie algebra $\mathfrak{g}_0$ on the vector space $\mathfrak{h}_1$ with the action map $\rho_{\mathfrak{h}_1}^{\mathfrak{g}_0} : \mathfrak{g}_0 \rightarrow \mathrm{End}(\mathfrak{h}_1)$, $\rho_{\mathfrak{h}_1}^{\mathfrak{g}_0} (x)(r) := \rho_{\mathfrak{h}_1} (\phi_0 (x))r = l'_2 (\phi_0 (x), r)$,  for $x \in \mathfrak{g}_0, r \in \mathfrak{h}_1$. Denote this representation by $(\mathfrak{h}_1)_{\phi_0}$. Let $\{ C^\ast_\mathsf{CE} (\mathfrak{g}_0, (\mathfrak{h}_1)_{\phi_0}), \delta'''_\mathsf{CE} \}$ be the corresponding Chevalley-Eilenberg cochain complex. Note that the condition (\ref{sk3}) is equivalent to $(\phi_1 \circ l_3 - l'_3 \circ \wedge^3 \phi_0 - \delta'''_\mathsf{CE} (\phi_2)) = 0$. Thus, combining all these facts, we get $\delta_{\mathsf{mLA}} (l_3, l_3', \phi_2) = 0.$ Hence we get a required triple $((\mathfrak{g}_0, \mathfrak{h}_0, \phi_0), (\mathfrak{g}_1, \mathfrak{h}_1, \phi_1), (l_3, l_3', \phi_2))$.

Finally, if $(\mathcal{G}, \mathcal{H}, \Phi)$ and $(\underline{\mathcal{G}}, \underline{\mathcal{H}}, \underline{\Phi})$  are equivalent skeletal morphism sh Lie algebras, then we have from (\ref{sk-eq1}), (\ref{sk-eq2}) and (\ref{sk-eq3}) that
\begin{align*}
 (\underline{l_3}, \underline{l_3'}, \underline{\phi_2}) - (l_3, l_3', \phi_2) = \delta_\mathsf{mLA} (\sigma, \sigma', \phi).
\end{align*}
Hence $[(l_3, l_3', \phi_2)] = [(\underline{l_3}, \underline{l_3'}, \underline{\phi_2})]$ in $H^3_\mathsf{mLA}(\phi_0, \phi_1)$.

\medskip

Conversely, let  $((\mathfrak{g}, \mathfrak{h}, \phi), (V,W,\psi), (\theta, \gamma, \eta))$ be a triple consisting of a morphism Lie algebra $(\mathfrak{g}, \mathfrak{h}, \phi)$, a representation $(V,W, \psi)$ and a $3$-cocycle $(\theta, \gamma, \eta) \in Z^3_\mathsf{mLA} (\phi, \psi)$. Then by a straightforward observation one can show that $(\mathcal{G}, \mathcal{H}, \Phi)$ is a skeletal morphism sh Lie algebra, where
\begin{align*}
&\mathcal{G} = ( V \xrightarrow{0} \mathfrak{g}, l_2, \theta),~ \text{ where } l_2 (x, v) = \rho_V (x) v, \text{ for } x \in \mathfrak{g}, v \in V,\\
&\mathcal{H} = ( W \xrightarrow{0} \mathfrak{h}, l_2', \gamma ), ~ \text{ where } l_2' (h, w) = \rho_W (h) w, \text{ for } h \in \mathfrak{h}, w \in W,\\
&\Phi = (\phi, \psi, \eta).
\end{align*}
Moreover, if $(\theta, \gamma, \eta)$ and $(\underline{\theta}, \underline{\gamma}, \underline{\eta})$ are two cohomologous $3$-cocycles, then the corresponding skeletal morphism sh Lie algebras are obviously equivalent. This completes the proof.
\end{proof}

\section{Morphism Lie groups}\label{sec-7}

In this section, we consider morphism Lie groups and define their cohomology with coefficients in a module. We also define differentiable cohomology of morphism Lie groups and construct a map from the differentiable cohomology of a morphism Lie group to the cohomology of the corresponding morphism Lie algebra.

A morphism Lie group is a triple $(G,H, \Phi)$ consisting of two Lie groups $G,H$ and a homomorphism $\Phi : G \rightarrow H$ of Lie groups. Let $(G,H, \Phi)$ be a morphism Lie group. Let $\mathrm{Lie}(G)$ and $\mathrm{Lie}(H)$ be the Lie algebras corresponding to the Lie groups $G$ and $H$, respectively. The Lie group homomorphism $\Phi : G \rightarrow H$ induces a Lie algebra homomorphism $\mathrm{Lie}(\Phi) : \mathrm{Lie}(G) \rightarrow \mathrm{Lie}(H)$ by
\begin{align*}
(\mathrm{Lie}(\Phi))(X) = (T_{1_G} \Phi) (X) , ~\text{ for } X \in \mathrm{Lie}(G) = T_{1_G} (G).
\end{align*}
In other words, the triple $(\mathrm{Lie}(G), \mathrm{Lie}(H), \mathrm{Lie}(\Phi))$ is a morphism Lie algebra, called the `infinitesimal' of the morphism Lie group $(G,H, \Phi)$.

%Let $G$ be a Lie group. A $G$-module consists of a vector space $V$ together with a Lie group homomorphism $\varrho : G \rightarrow \mathrm{Aut}(V)$. 
Let $(G, H, \Phi)$ be a morphism Lie group. A module over it consists of a triple $(V,W, \psi)$ in which $V,W$ are vector spaces with Lie group homomorphisms $\varrho_V : G \rightarrow \mathrm{Aut} (V)$ and $\varrho_W : H \rightarrow \mathrm{Aut}(W)$, and $\psi : V \rightarrow W$ is a linear map satisfying $\psi ( \varrho_V (g) v) = \varrho_W (\Phi (g)) \psi (v)$, for $g \in G$, $v \in V$. If $(V,W, \psi)$ is a module over the morphism Lie group $(G,H, \Phi)$, then it can be considered as a representation of the infinitesimal morphism Lie algebra $(\mathrm{Lie}(G), \mathrm{Lie}(H), \mathrm{Lie}(\Phi))$. Here the representation of the Lie algebra $\mathrm{Lie}(G)$ on $V$ is given by
\begin{align*}
\rho_V : \mathrm{Lie} (G) \rightarrow \mathrm{End}(V),~ \rho_V (X) v = \frac{d}{dt}|_{t=0} \varrho_V(\mathrm{exp} (tX))v,  ~\text{ for } X \in \mathrm{Lie}(G), v \in V,
\end{align*}
where $\mathrm{exp} : \mathfrak{g} \rightarrow G$ is the exponential map. The representation of the Lie algebra $\mathrm{Lie}(H)$ on $W$ is defined similarly.

\medskip

In the following, we first recall the group cohomology \cite{weibel} and using it we define cohomology of morphism Lie groups. Let $G$ be a Lie group and $\varrho_V : G \rightarrow \mathrm{Aut}(V)$ be a Lie group homomorphism (in this case we say that $V$ is a $G$-module). Then there is a cochain complex $\{ C^\ast_\mathsf{gp} (G,V), \delta_\mathsf{gp} \}$, where $C^{n \geq 0}_\mathsf{gp} (G,V) = \mathrm{Maps}(G^{\times n}, V)  $ and the coboundary operator $\delta_\mathsf{gp} : C^{n}_\mathsf{gp} (G,V) \rightarrow C^{n +1}_\mathsf{gp} (G,V)$ is given by
\begin{align*}
(\delta_\mathsf{gp} f) (g_1, \ldots, g_{n+1}) =& \varrho_V (g_1)(f (g_2, \ldots, g_{n+1})) + (-1)^{i} f (g_1, \ldots, g_i \cdot g_{i+1}, \ldots, g_{n+1}) + (-1)^{n+1}f (g_1, \ldots, g_n),
\end{align*}
for $f \in C^n_\mathsf{gp}(G,V)$ and $g_1, \ldots, g_{n+1} \in G$.
The corresponding cohomology groups are called the group cohomology of $G$ with coefficients in the $G$-module $V$.
The differential cohomology of $G$ with coefficients in $V$ is the cohomology of the subcomplex $\{ C^\ast_{\mathsf{gp}, d} (G,V), \delta_\mathsf{gp} \}$, where
\begin{align*}
C^n_{\mathsf{gp}, d} (G,V) := \{ f \in C^n_{\mathsf{gp}} (G,V) ~|~ f (g_1, \ldots, g_n) = 0 \text{ if } g_i = 1_G \}.
\end{align*}
The van Est map \cite{van} is a map between cochain complexes
%\begin{align*}
$\mathrm{vE} : C^\ast_{\mathsf{gp}, d} (G,V)  \rightarrow C^\ast_{\mathsf{CE}} (\mathrm{Lie}(G),V)$ 
%\end{align*}
from $\{ C^\ast_{\mathsf{gp}, d} (G,V), \delta_\mathsf{gp} \}$ to the Chevalley-Eilenberg cochain complex $\{ C^\ast_{\mathsf{CE}} (\mathrm{Lie}(G),V) , \delta_\mathsf{CE} \}.$ It is explicitly given by
\begin{align}\label{ve-map}
(\mathrm{vE})(f) (X_1, \ldots, X_n) = \sum_{\sigma \in S_n} \mathrm{sgn}(\sigma) R_{X_{\sigma (1)}} \cdots R_{X_{\sigma (n)}} (f),
\end{align}
where $R_X : C^n_{\mathsf{gp}, d} (G,V) \rightarrow C^{n-1}_{\mathsf{gp}, d} (G,V)$ is the map which differentiates $f(-, g_2, \ldots, g_n)$ at the point $1_G$ with respect to the right invariant vector field corresponding to $X$.

Let $(G,H, \Phi)$ be a morphism Lie group and $(V,W, \psi)$ be a module over it. Then there is a $G$-module structure on $W$ given by the Lie group homomorphism $\varrho_W^G : G \rightarrow \mathrm{Aut} (W), \varrho_W^G (g) (w) = \varrho_W (\Phi(g)) w,$ for $g \in G, w \in W$. We denote this $G$-module by $W_\Phi$. Let $\delta_\mathsf{gp}' $ (resp.  $\delta_\mathsf{gp}''$) be the coboundary operator for the group cohomology complex of $G$ with coefficients in $V$ (resp. of $H$ with coefficients in $W$), and $\delta'''_\mathsf{gp}$ be the coboundary operator for the group cohomology complex of $G$ with coefficients in $W_\Phi$. For each $n \geq 0$, we define an abelian group $C^n_\mathsf{mLG} (\Phi, \psi)$ by
\begin{align*}
C^0_\mathsf{mLG} (\Phi, \psi) = V  \quad \text{ and } \quad C^{n \geq 1}_\mathsf{mLG} (\Phi, \psi) =  C^n_\mathsf{gp} (G, V) \oplus  C^n_\mathsf{gp} (H, W) \oplus  C^{n-1}_\mathsf{gp} (G, W_\Phi).
\end{align*} 
We define a map $\delta_\mathsf{mLG} : C^n_\mathsf{mLG} (\Phi, \psi) \rightarrow C^{n+1}_\mathsf{mLG} (\Phi, \psi)$, for $n \geq 0$, by
\begin{align*}
\delta_\mathsf{mLG} (v) = (\delta'_\mathsf{gp}(v), ~ \delta''_\mathsf{gp}(\psi(v)) , ~ 0) ~~ \text{ and } ~~ \delta_\mathsf{mLG} (\Theta, \Gamma, \Lambda) = (\delta'_\mathsf{gp} (\Theta),~ \delta''_\mathsf{gp} ( \Gamma), ~ \psi \circ \Theta - \Gamma \circ \Phi^{\times n} - \delta'''_\mathsf{gp} (\Lambda)),
\end{align*}
for $v \in V$ and $(\Theta, \Gamma, \Lambda) \in C^{n \geq 1}_\mathsf{mLG} (\Phi, \psi).$ Then it is easy to see that $(\delta_\mathsf{mLG})^2 = 0$. The cohomology groups of the cochain complex $\{ C^\ast_\mathsf{mLG} (\Phi, \psi), \delta_\mathsf{mLG} \}$ are called the cohomology of the morphism Lie group $(G,H, \Phi)$ with coefficients in the module $(V,W, \psi)$, and denoted by $H^\ast_\mathsf{mLG} (\Phi, \psi).$

\begin{remark}
For any Lie group $G$, the corresponding Lie algebra $\mathrm{Lie}(G)$ can be given a $G$-module structure with the Lie group homomorphism $\rho_{\mathrm{Lie}(G)} : G \rightarrow \mathrm{Aut}(\mathrm{Lie} (G))$, $(\rho_{\mathrm{Lie} (G)} (g)) X  := \mathrm{Ad}^G_g (X)$, for $g \in G$ and $X \in \mathrm{Lie}(G)$. This is called the adjoint $G$-module. It has been shown in \cite{cop} that the group cohomology of $G$ with coefficients in the adjoint $G$-module $\mathrm{Lie}(G)$ controls the deformations of $G$. Similar to the definition of adjoint module of a Lie group, we may define the adjoint module of a morphism Lie group $(G,H, \Phi)$ on the triple $(\mathrm{Lie}(G), \mathrm{Lie}(H), \mathrm{Lie}(\Phi))$. By following \cite{cop} one may show that the cohomology of the morphism Lie group $(G,H,\Phi)$ with coefficients in the adjoint module governs the deformations. We will address deformation related questions for morphism Lie groups in a separate article.
\end{remark}

In the following, we consider the differentiable subcomplex $\{ C^\ast_{\mathsf{mLG},d} (\Phi, \psi), \delta_\mathsf{mLG} \}$ of the cochain complex $\{ C^\ast_\mathsf{mLG} (\Phi, \psi), \delta_\mathsf{mLG} \}$, where
\begin{align*}
C^0_{\mathsf{mLG},d} (\Phi, \psi) = V ~~~ \text{ and } ~~~ C^{n }_{\mathsf{mLG}, d} (\Phi, \psi) := C^n_{\mathsf{gp}, d} (G,V) \oplus C^n_{\mathsf{gp}, d} (H,W) \oplus C^{n-1}_{\mathsf{gp}, d} (G,W_\Phi) \subset C^{n }_\mathsf{mLG} (\Phi, \psi).
\end{align*}
We call the corresponding cohomology groups as the differentiable cohomology of the morphism Lie group $(G,H,\Phi)$ with coefficients in the module $(V,W, \psi)$. We denote cohomology groups by $H^\ast_{\mathsf{mLG},d} (\Phi, \psi)$.

Let $(G,H, \Phi)$ be a morphism Lie group and $(V,W, \psi)$ be a bimodule over it. Therefore, we can consider the differentiable complex $\{ C^\ast_{\mathsf{mLG},d} (\Phi, \psi), \delta_\mathsf{mLG} \}$. On the other hand, the triple $(V,W, \psi)$ can be regarded as a representation of the morphism Lie algebra $(\mathrm{Lie}(G), \mathrm{Lie}(H), \mathrm{Lie}(\Phi))$. Thus, we can consider the cochain complex $\{ C^\ast_\mathsf{mLA}(\mathrm{Lie}(\Phi), \psi), \delta_\mathsf{mLA} \}$. Following the van Est map (\ref{ve-map}), we are now able to find a connection between these two cochain complexes and induced cohomologies.

\begin{thm}
With the above notations, the collection of maps $\mathrm{vE} : C^n_{\mathsf{mLG}, d} (\Phi, \psi) \rightarrow  C^n_\mathsf{mLA} (\mathrm{Lie}(\Phi), \psi)$ given by
\begin{align*}
(\mathrm{vE}) (\Theta, \Gamma, \Lambda) = \big( (\mathrm{vE}) (\Theta), ~ (\mathrm{vE}) (\Gamma),~ (\mathrm{vE}) (\Lambda)  \big)
\end{align*}
defines a homomorphism of cochain complexes from $\{ C^\ast_{\mathsf{mLG}, d} (\Phi, \psi), \delta_\mathsf{mLG} \}$ to $\{ C^\ast_\mathsf{mLA} (\mathrm{Lie} (\Phi), \psi), \delta_\mathsf{mLA} \}$. Hence there is a induced map $\mathrm{vE} : H^\ast_{\mathsf{mLG}, d} (\Phi, \psi) \rightarrow H^\ast_\mathsf{mLA} (\mathrm{Lie}(\Phi), \psi)$ on the level of cohomology.
\end{thm}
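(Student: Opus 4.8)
The plan is to reduce the assertion to the classical van Est theorem together with two easy naturality observations. Indeed, the differential $\delta_\mathsf{mLG}$ is built from the group coboundary operators $\delta'_\mathsf{gp}, \delta''_\mathsf{gp}, \delta'''_\mathsf{gp}$, pullback along $\Phi$, and post-composition with $\psi$, while $\delta_\mathsf{mLA}$ is built from $\delta'_\mathsf{CE}, \delta''_\mathsf{CE}, \delta'''_\mathsf{CE}$, pullback along $\phi := \mathrm{Lie}(\Phi)$, and post-composition with the same $\psi$. First I would recall from \cite{van} that for any Lie group $G$ and $G$-module $V$ the van Est map $\mathrm{vE} : C^\ast_{\mathsf{gp},d}(G,V) \to C^\ast_\mathsf{CE}(\mathrm{Lie}(G),V)$ is a morphism of cochain complexes and is the identity in degree zero. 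Applying this to $(G,V)$ and to $(H,W)$ handles the first two slots, giving $\mathrm{vE}(\delta'_\mathsf{gp}\Theta) = \delta'_\mathsf{CE}(\mathrm{vE}(\Theta))$ and $\mathrm{vE}(\delta''_\mathsf{gp}\Gamma) = \delta''_\mathsf{CE}(\mathrm{vE}(\Gamma))$. For the third slot I would apply the same theorem to the pair $(G, W_\Phi)$, which is legitimate because the $G$-module $W_\Phi$ differentiates precisely to the $\mathrm{Lie}(G)$-representation $W_\phi$ used in the third summand of $C^\ast_\mathsf{mLA}(\mathrm{Lie}(\Phi),\psi)$, as one sees from $\frac{d}{dt}\big|_{t=0}\,\varrho_W(\Phi(\exp tX))w = \rho_W(\phi(X))w$; hence $\mathrm{vE}(\delta'''_\mathsf{gp}\Lambda) = \delta'''_\mathsf{CE}(\mathrm{vE}(\Lambda))$.

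Next I would dispatch the two remaining terms $\psi\circ\Theta$ and $\Gamma\circ\Phi^{\times n}$ that occur in the third component of $\delta_\mathsf{mLG}$. For the first, each operator $R_X$ in (\ref{ve-map}) is a directional derivative that does not involve the coefficient module, so since $\psi$ is linear it commutes with every $R_X$, whence $\mathrm{vE}(\psi\circ\Theta) = \psi\circ\mathrm{vE}(\Theta)$. For the second, I would use that $\Phi$ being a Lie group homomorphism makes the right-invariant vector field on $G$ associated to $X$ be $\Phi$-related to the right-invariant vector field on $H$ associated to $(T_{1_G}\Phi)(X) = \phi(X)$; differentiating the pulled-back cochain then yields $R_X(\Gamma\circ\Phi^{\times n}) = (R_{\phi(X)}\Gamma)\circ\Phi^{\times(n-1)}$, and iterating over a permutation $\sigma$ in (\ref{ve-map}) gives $\mathrm{vE}(\Gamma\circ\Phi^{\times n})(X_1,\ldots,X_n) = \mathrm{vE}(\Gamma)(\phi X_1,\ldots,\phi X_n) = (\mathrm{vE}(\Gamma)\circ\wedge^n\phi)(X_1,\ldots,X_n)$. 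Combining the four identities, for $(\Theta,\Gamma,\Lambda)\in C^{n\geq 1}_{\mathsf{mLG},d}(\Phi,\psi)$ the third component of $\mathrm{vE}(\delta_\mathsf{mLG}(\Theta,\Gamma,\Lambda))$ equals
\[
\mathrm{vE}(\psi\circ\Theta) - \mathrm{vE}(\Gamma\circ\Phi^{\times n}) - \mathrm{vE}(\delta'''_\mathsf{gp}\Lambda) = \psi\circ\mathrm{vE}(\Theta) - \mathrm{vE}(\Gamma)\circ\wedge^n\phi - \delta'''_\mathsf{CE}(\mathrm{vE}(\Lambda)),
\]
which is exactly the third component of $\delta_\mathsf{mLA}(\mathrm{vE}(\Theta,\Gamma,\Lambda))$; the degree-zero case $\delta_\mathsf{mLG}(v) = (\delta'_\mathsf{gp}(v), \delta''_\mathsf{gp}(\psi(v)), 0)$ is checked the same way with less effort. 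This establishes $\mathrm{vE}\circ\delta_\mathsf{mLG} = \delta_\mathsf{mLA}\circ\mathrm{vE}$, so $\mathrm{vE}$ is a cochain map and therefore descends to a map $H^\ast_{\mathsf{mLG},d}(\Phi,\psi) \to H^\ast_\mathsf{mLA}(\mathrm{Lie}(\Phi),\psi)$ on cohomology.

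The only step carrying genuine content is the naturality identity $R_X(\Gamma\circ\Phi^{\times n}) = (R_{\phi(X)}\Gamma)\circ\Phi^{\times(n-1)}$, i.e. the compatibility of the operators defining $\mathrm{vE}$ with pullback along $\Phi$; this is precisely where one uses that $\Phi$ is a group homomorphism, so that the relevant right-invariant vector fields are $\Phi$-related and the derivative of $t\mapsto\Phi(\exp tX)$ at $0$ is $\phi(X)$. Everything else is either a direct citation of the classical van Est theorem applied to one of the three auxiliary pairs $(G,V)$, $(H,W)$, $(G,W_\Phi)$, or the elementary remark that a fixed linear map commutes with differentiation. Accordingly, I expect the write-up to be short, the only care needed being to phrase the naturality lemma for $\mathrm{vE}$ cleanly and to keep track of which module structure ($W_\Phi$ versus $W_\phi$) enters at each point.
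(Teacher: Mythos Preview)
Your argument is correct and supplies exactly the details one would expect: you invoke the classical van Est chain-map property for each of the three auxiliary complexes, verify that post-composition with the linear map $\psi$ commutes with the operators $R_X$, and establish the naturality identity $R_X(\Gamma\circ\Phi^{\times n}) = (R_{\phi(X)}\Gamma)\circ\Phi^{\times(n-1)}$ using that $\Phi$ is a Lie group homomorphism. The paper itself states this theorem without proof, so there is no argument to compare against; your write-up is the natural verification the authors evidently regarded as routine.
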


\medskip

%\textcolor{red}{What is $\mathbb{K}$, shuffles, koszul and anti-koszul sign, Maurer-Cartan element}

%\vspace*{1cm}

\noindent {\bf Acknowledgements.} The author would like to thank IIT Kanpur, India where some parts of the work have been carried out.

% research is supported by the fellowship of Indian Institute of Technology (IIT) Kanpur. The author thanks the Institute for support.

%\vspace*{1cm}

%\noindent {\bf Data Availability Statement.} Data sharing is not applicable to this article as no new data were created or analyzed in this study.

\end{document}